\newcommand{\R}{\mathbb{R}}
\newcommand{\vW}{\bm{W}}
\newcommand{\vw}{\bm{w}}
\newcommand{\vtheta}{\bm{\theta}}
\newcommand{\vx}{\bm{x}}
\newcommand{\mW}{\bm{W}}
\newcommand{\vb}{\bm{b}}
\newcommand{\va}{\bm{a}}
\newcommand{\vc}{\bm{c}}
\newcommand{\vxi}{\bm{\xi}}
\newcommand{\sR}{\mathbb{R}}
\newtheorem{thm}{Theorem}
\newcommand{\T}{\intercal}
\newcommand{\abs}[1]{\lvert#1\rvert} 
\newcommand{\D}{\mathrm{d}}
\newcommand*\diff{\mathop{}\!\D}
\newcommand{\fF}{\mathcal{F}}
\newcommand{\E}{\mathrm{e}}
\newcommand{\I}{\mathrm{i}}
\newcommand{\norm}[1]{\lVert#1\rVert}
\newcommand{\TT}{\mathcal{T}}
\newcommand{\N}{\mathcal{N}}
\newcommand*\bigcdot{\mathpalette\bigcdot@{.5}}
\newcommand*\bigcdot@[2]{\mathbin{\vcenter{\hbox{\scalebox{#2}{$\m@th#1\bullet$}}}}}
\begin{document}
\title{Implicit bias with Ritz-Galerkin method in understanding deep learning for solving PDEs}


 \author[J. Wang et~al.]{Jihong Wang\affil{1}\comma,
       Zhi-Qin John Xu\affil{2}\corrauth, Jiwei Zhang\affil{3}~and Yaoyu Zhang\affil{2}}
 \address{\affilnum{1}\ Beijing Computational Science Research Center, Beijing 100193, P.R. China. \\
           \affilnum{2}\ School of Mathematical Sciences, Institute of Natural Sciences, MOE-LSC and Qing Yuan Research Institute,
    Shanghai Jiao Tong University, Shanghai 200240, P.R. China.\\
    \affilnum{3}\ School of Mathematics and Statistics, and Hubei Key Laboratory of Computational Science, Wuhan University, Wuhan 430072, P.R. China.
    }
 \emails{{\tt jhwang@csrc.ac.cn} (J.~Wang), {\tt xuzhiqin@sjtu.edu.cn} (Z.~Xu),
          {\tt jiweizhang@whu.edu.cn} (J.~Zhang), {\tt zhyy.sjtu@sjtu.edu.cn} (Y.~Zhang)}



\begin{abstract}
This paper aims at studying  the difference between Ritz-Galerkin (R-G) method and deep neural network (DNN) method in solving partial differential equations (PDEs) to better understand deep learning. To this end, we consider solving a particular Poisson problem, where the information of the right-hand side of the equation $f$ is only available at $n$ sample points, that is, $f$ is known at finite sample points. 
Through both theoretical and numerical studies, we show that solution of the R-G method converges to a piecewise linear function for the one dimensional (1D) problem or functions of lower regularity for high dimensional problems. With the same setting, DNNs however learn a relative smooth solution regardless of the dimension, this is, DNNs implicitly bias towards functions with more low-frequency components among all functions that can fit the equation at available data points. This bias is explained by the recent study of frequency principle (Xu et al., (2019) \cite{xu2019frequency} and Zhang et al., (2019) \cite{zhang2019explicitizing,luo2019on}). In addition to the similarity between the traditional numerical methods and DNNs in the approximation perspective, our work shows that the implicit bias in the learning process, which is different from traditional numerical methods, could help better understand the characteristics of DNNs.
\end{abstract}

\ams{35Q68, 65N30, 65N35
}
\keywords{Deep learning, Ritz-Galerkin method, Partial differential equations, F-Principle}

\maketitle

\section{Introduction}
Deep neural networks (DNNs) become increasingly important in scientific computing fields
\cite{weinan2017deep,weinan2018deep,han2018solving,he2018relu,liao2019deep,siegel2019approximation,hamilton2019dnn,cai2019deep,wang2020mesh}.
A major potential advantage over traditional numerical methods is
that DNNs could overcome the curse of dimensionality in high-dimensional
problems. With traditional numerical methods, several studies have
made progress on the understanding of the algorithm characteristics
of DNNs. For example, by exploring
ReLU DNN representation of continuous piecewise linear function in FEM, the work \cite{he2018relu}
theoretically establishes that a ReLU DNN can accurately represent any linear finite element functions. In the aspect of the convergence behavior, the works \cite{xu2019training,xu2019frequency}
show a Frequency Principle (F-Principle) that DNNs often learn
low-frequency components first while most of the conventional methods (e.g.,
Jacobi method) exhibit the opposite convergence behavior---higher-frequency components are learned faster. These understandings could
lead to a better use of DNNs in practice, such as  DNN-based algorithms are proposed
based on the F-Principle to fast eliminate high-frequency error \cite{cai2020phase,liu2020multi}.


The aim of this paper is to investigate the different behaviors between DNNs and traditional numerical method, e.g., Ritz-Galerkin (R-G) method. To this end, we utilize an example to show their stark difference, that is, solving PDEs given a few sample points. We denote $n$ by the sample number and $m$ by the basis number in the Ritz-Galerkin method or the neuron number in DNNs. In traditional PDE models, we consider the situation where the source functions in the equation are completely known, i.e. the sample number $n$ can go to infinity. 
But in practical applications, such as signal processing, statistical mechanics, chemical and biophysical dynamic systems, we often encounter the problems that only a few sample values can be obtained. It is interesting to ask what effect R-G methods would have on solving this particular problem, and what the solution would be obtained by the DNN method. 

In this paper, we show that R-G method considers the discrete sampling points as linear combinations of Dirac delta functions, while DNN methods always use a relatively smooth function to interpolate the discrete sampling points.  And we incorporate the F-Principle to show how DNN is different from the R-G method, that is, for all functions that can fit the training data, DNNs implicitly bias towards functions with more low-frequency components. In addition to the similarity between the traditional numerical methods and DNNs in the approximation perspective \cite{he2018relu}, Our work shows that the implicit bias in the learning process, which is different from traditional numerical methods, could  help better understand the characteristics of DNNs.

The rest of the paper is organized as follows. In section 2, we briefly introduce the R-G method and the DNN method. In sections 3 and 4, we present the difference between using these two methods to solve PDEs theoretically and numerically. We end the paper with the conclusion in section 5.
\section{Preliminary}
In this section we take the toy model of Poisson's equation as an example to investigate the difference of solution behaviors between R-G method and DNN method.

\subsection{Poisson problem}
We consider the $d$-dimensional Poisson problem posed on the bounded domain $\Omega \subset \mathbb{R}^d$ with Dirichlet boundary condition as 
\begin{eqnarray}\label{Model}
\left\{\begin{array}{c}
 - \Delta u(\bm{x})=f(\bm{x}),\quad \bm{x}\in\Omega,\\
 u(\bm{x})=0, \quad \bm{x} \in \partial\Omega,
\end{array}
\right.
\end{eqnarray}
where $\displaystyle \Delta$ represents the Laplace operator, $\bm{x}=(x_1,x_2,\cdots, x_d)$ is a d-dimensional vector. It is known that the problem (\ref{Model})
admits a unique solution for $f\in L^2(\Omega)$, and its regularity can be raised to $C_b^{s+2}(\Omega)$ if $f\in C_b^s(\Omega)$
for some $s\ge 0$.
 In literatures, there are a number of effective numerical methods to solve problem \eqref{Model} in general case. We here consider a special situation: we only have the information of $f(\bm{x})$ at the $n$ sample points $\bm{x}_i$ $(i =1,\cdots,n)$. In practical applications, we may imagine that we only have finite experiment data, i.e., the value of  $f(\bm{x}_i)$ $(i =1,\cdots,n)$, and have no more information of $f(\bm{x})$ at other points. 
Through solving such a particular Poisson problem \eqref{Model} with R-G method and deep learning method, we aim to find the bias of these two methods in solving PDEs.

\subsection{R-G method}
In this subsection, we briefly introduce the R-G method \cite{Brenner2008The}. 
For problem \eqref{Model}, we construct a functional
\begin{eqnarray}
J(u) = \frac12a(u,u)-(f,u),
\end{eqnarray}
where
$$
a(u,v) = \int_\Omega \nabla u(\bm{x})\nabla v(\bm{x}) d\bm{x} ,\quad (f,v)=\int_\Omega f(\bm{x})v(\bm{x}) d\bm{x}. 
$$
The variational form of problem \eqref{Model} is the following:
\begin{eqnarray}\label{MPE}
\text{Find}\; u\in H_0^1(\Omega),\; \text{s.t.} \; J(u)=\min_{v\in H_0^1(\Omega)} J(v).
\end{eqnarray}
The weak form of \eqref{MPE} is to find $u\in H_0^1(\Omega)$ such that  
\begin{eqnarray}\label{VW}
a(u,v)=(f,v), \quad \forall \;v \in H_0^1(\Omega).
\end{eqnarray}
The problem (\ref{Model}) is the strong form if the solution $u\in H_0^2(\Omega)$. 
To numerically solve \eqref{VW}, we now introduce the finite dimensional space $U_h$ to approximate the infinite dimensional space $H_0^1(\Omega)$.  Let $U_h \subset H_0^1(\Omega)$ be a subspace with a sequence of basis functions $\{\phi_1,\phi_2,\cdots,\phi_m\}$. The numerical solution $u_h \in U_h$ that we will find can be represented as 
\begin{eqnarray}\label{ucphi}
u_h = \sum_{k=1}^{m} c_k\phi_k,
\end{eqnarray}
where the coefficients $\{c_i\}$ are the unknown values that we need to solve.
Replacing $H_0^1(\Omega)$ by $U_h$, both problems \eqref{MPE} and \eqref{VW} can be transformed to solve the following system:
\begin{eqnarray}\label{RG}
\sum_{k=1}^{m}c_ka(\phi_k,\phi_j)= (f,\phi_j),\quad j=1,2,\cdots,m.
\end{eqnarray}
From \eqref{RG}, we can calculate $c_i$, and then obtain the numerical solution $u_h$. We usually call \eqref{RG} R-G equation. 

For different types of basis functions, the R-G method can be divided into finite element method (FEM) and spectral method (SM) and so on. If the basis functions $\{\phi_i(\bm{x})\}$ are local, namely, they are compactly supported, this method is usually taken as the FEM. Assume that $\Omega$ is a polygon, and we divide it into finite element grid $\TT_h$ by simplex, $h=\max_{\tau\in\TT_h}\text{diam}(\tau)$.  A typical finite element basis is the linear hat basis function, satisfying
\begin{eqnarray}
\phi_k(\bm{x_j}) = \delta_{kj},\quad \bm{x_j}\in \N_h,
\end{eqnarray}
where $\N_h$ stands for the set of the nodes of grid $\TT_h$. The schematic diagram of the basis functions in 1-D and 2-D are shown in Fig. \ref{basis}.
On the other hand, if we choose the global basis function such as Fourier basis or Legendre basis \cite{Shen2011Spectral}, we call R-G method spectral method.

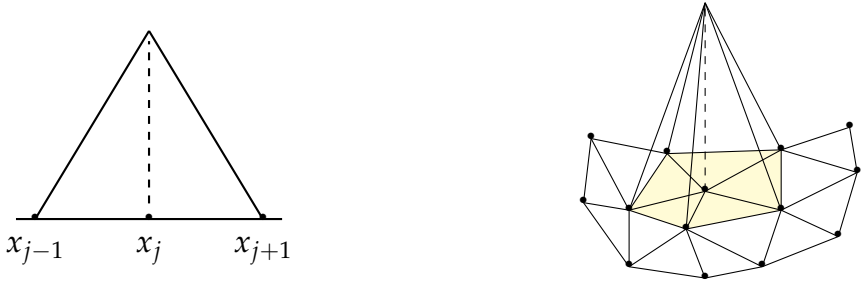
\begin{figure}[h!] 
\centering
\vspace{8pt}
\begin{minipage}[h]{0.45\linewidth}
\begin{tikzpicture}[font=\large,scale=0.5]
\centering
     \tikzset{to/.style={->,>=stealth',line width=.5pt}}
      \node(v1) at (0,0) {$\bigcdot$}; \node[yshift=-0.2cm]  at (v1.south) {$x_{j-1}$};   
      \node(v2) at (3,0) {$\bigcdot$}; \node[yshift=-0.2cm] at (v2.south) {$x_{j}$};  
      \node(v3) at (6,0) {$\bigcdot$}; \node[yshift=-0.2cm]  at (v3.south) {$x_{j+1}$};  
     \node(v4) at (3,5) {};  
      \draw[thick] (-0.5,0.01)--(6.5,0.01); \draw[thick] (0,0)--(3,5); \draw[thick] (6,0)--(3,5);
      \draw[thick, dashed] (v2)--(v4);
\end{tikzpicture}
\end{minipage}
\hspace{0.6cm}
\begin{minipage}[h]{0.45\linewidth}
\begin{tikzpicture}[font=\large,scale=0.5]
\centering
     \tikzset{to/.style={->,>=stealth',line width=.5pt}}
     \coordinate (v1) at (0,0); \coordinate (v7) at (0,5);
     \coordinate (v2) at (-2,-0.5); \coordinate (v3) at (-0.5,-1);
     \coordinate (v4) at (2.,-0.5); \coordinate (v5) at (2,1.1);
     \coordinate (v6) at (-1,1); 
     \draw[fill=yellow!20] (v2)--(v3)--(v4)--(v5)--(v6)--cycle; 
      \node at (v1) {$\bigcdot$};   \node at (v2) {$\bigcdot$}; 
      \node at (v3) {$\bigcdot$};   \node at (v4) {$\bigcdot$};
      \node at (v5) {$\bigcdot$};   \node at (v6) {$\bigcdot$}; 
      \draw (v1)--(v2);  \draw (v1)--(v3);  \draw (v1)--(v4);  \draw (v1)--(v5);  \draw (v1)--(v6); 
      \draw[dashed] (v1)--(v7); \draw (v2)--(v7); \draw (v3)--(v7); \draw (v4)--(v7); \draw (v5)--(v7);\draw (v6)--(v7); 
      \coordinate (e1) at (-3,1.4); \coordinate (e2) at (-3.2,-0.3); \coordinate (e3) at (-2,-2); \coordinate (e4) at (0,-2.3);     
      \coordinate (e5) at (1.5,-2); \coordinate (e6) at (3.5,-1.2); \coordinate (e7) at (4,0.5); \coordinate (e8) at (3.8,1.7);
       \node at (e1) {$\bigcdot$};   \node at (e2) {$\bigcdot$}; 
      \node at (e3) {$\bigcdot$};   \node at (e4) {$\bigcdot$};
      \node at (e5) {$\bigcdot$};   \node at (e6) {$\bigcdot$};   \node at (e7) {$\bigcdot$}; \node at (e8) {$\bigcdot$}; 
      \draw (e1)--(e2)--(e3)--(e4)--(e5)--(e6)--(e7)--(e8); 
      \draw (e1)--(v6);  \draw (e1)--(v2);  \draw (e2)--(v2);  \draw (e3)--(v2);  \draw (e3)--(v3);
      \draw (e4)--(v3);  \draw (e5)--(v3);  \draw (e5)--(v4);  \draw (e6)--(v4);  \draw (e7)--(v4); \draw (e7)--(v5);   
      \draw (e8)--(v5); 
\end{tikzpicture}
\end{minipage}
      \caption{The finite element basis function in 1d and 2d.}
      \label{basis}
\vspace{8pt}
 \end{figure}

The error estimate theory of R-G method has been well established. Under suitable assumption on the regularity of solution,  the linear finite element solution $u_h$ has the following error estimate 
$$\Vert u-u_h \Vert_{1} \leq C_1h|u|_{2},$$
where the constant $C_1$ is independent of grid size $h$. The spectral method has the following error estimate 
$$\Vert u-u_h \Vert \leq \frac {C_2}{m^s},$$
where $C_2$ is a constant and the exponent $s$ depends only on the regularity (smoothness) of the solution $u$. If $u$ is smooth enough and satisfies certain boundary conditions, the spectral method has the spectral accuracy. 

In this paper, we use the R-G method to solve the Poisson problem \eqref{Model} in a special setting, i.e. we only have the information of $f(\bm{x})$ at the $n$ sample points $\bm{x}_i$ $(i =1,\cdots,n)$. In this situation, the integral on the right hand side (r.h.s.) of R-G equation \eqref{RG} is hard to be computed exactly, so we need to compute it with the proper numerical method. For higher dimensional case, it is known the Monte Carlo (MC) integration \cite{Christian2004Monte} may be the only viable approach. Then replacing the integral on the r.h.s. of \eqref{RG} with the form of MC integral formula, we obtain
\begin{eqnarray} \label{VarEqM}
\sum_{k=1}^{m}c_k a(\phi_k,\phi_j)= \frac{1}{n}\sum_{i=1}^nf(\vx_i)\phi_j(\bm{x}_i),\quad j=1,2,\cdots,m.
\end{eqnarray}
In fact, if we use the Gaussian quadrature rule to compute the integral on the r.h.s. of equation \eqref{RG}, we still have the similar form as \eqref{VarEqM}, except that $1/n$ is replaced by the  corresponding Gaussian quadrature weights $w_i$. Because of the inaccuracy of the right-hand side integral, equation \eqref{VarEqM} is actually different from the traditional R-G method. However, we can see clearly the bias of the R-G method under this special setting in the later numerical experiments.


\subsection{DNN method}
We now introduce the DNN method. The $L$-layer neural network is denoted by
\begin{equation}
u_{\vtheta}(\vx)= \vW^{[L-1]} \sigma\circ(\mW^{[L-2]}\sigma\circ(\cdots (\mW^{[0]} \vx + \vb^{[0]} )\cdots)+\vb^{[L-2]})+\vb^{[L-1]},\label{generalDnn}
\end{equation}
where $\mW^{[l]} \in \sR^{m_{l+1}\times m_{l}}$, $\vb^{[l]}=\sR^{m_{l+1}}$, $m_0=d$, $m_{L}=1$,
$\sigma$ is a scalar function and ``$\circ$'' means entry-wise operation. 
We denote the set of parameters by \[
\vtheta=(\mW^{[0]},\mW^{[1]},\ldots,\vW^{[L-1]},\vb^{[0]},\vb^{[1]},\ldots,\vb^{[L-1]}),
\] 
and  an entry of $\vW^{[l]}$ by   $\vW^{[l]}_{ij}$.

Particularly, the one-hidden layer DNN with activation function $\sigma$ is given as 
\begin{eqnarray}
u_{\theta}(\bm{x}) = \sum_{k=1}^{m} c_k{\sigma}({w}_k \cdot \bm{x}+ b_k),
\end{eqnarray}
where $w_k\in \R^d, c_k, b_k\in \R$ are parameters. If we denote $\sigma({w}_k \cdot \bm{x}+ b_k)=\phi_k(\vx)$, then we obtain the similar form to R-G solution \eqref{ucphi}, i.e.,
\begin{eqnarray}\label{relunn}
u_{\theta}(\bm{x}) =\sum_{k=1}^{m} c_k{\phi_k}(\vx).
\end{eqnarray}
The difference between the expressions of the solutions of these two methods is that the basis functions of the R-G solution are known, while the  bases of the DNN solution are unknown, and need to be obtained together with the coefficients through the gradient descent algorithm with a loss function.
 

 The loss function corresponding to problem \eqref{Model} is given by 
\begin{eqnarray}\label{dnnloss}
L_{0}(u_{\theta},f) = \frac{1}{n} \sum_{i=1}^n\left( \Delta u_{\theta}(\bm{x}_i)+f(\bm{x}_i) \right)^2+ \beta \int_{\partial\Omega} u_{\theta}(\bm{x})^2 ds,\label{eq:lossdirect}
\end{eqnarray}
 or a variation form \cite{weinan2017deep}
\begin{eqnarray}\label{dnnloss_variation}
L_{1}(u_{\theta},f) = \frac{1}{n} \sum_{i=1}^n\left(\frac{1}{2}|\nabla_{x}u_{\theta}(\bm{x}_i)|^{2}-f(\bm{x}_i)u_{\theta}(\bm{x}_i)\right)+ \beta \int_{\partial\Omega} u_{\theta}(\bm{x})^2 ds, \label{eq:Energy-1}
\end{eqnarray}
where the last term is for the boundary condition and $\beta$ is a hyper-parameter.

\subsection{Frequency Principle}
In this section, we illustrate and introduce a rigorous definition of the F-Principle.

We start from considering a two-layer neural network, following \cite{zhang2019explicitizing,luo2019on},
\begin{equation}
    f(\vx, \vtheta) = \sum_{j=1}^m a_j \sigma\left(\vw_j^\T\vx - \abs{\vw_j}c_j\right), \label{eq:relunn2}
\end{equation}
where $\vw_j, \vx \in \sR^d$, $\vtheta = (\va^\T,\vw_1^\T,\ldots,\vw_m^\T,\vc^\T)^\T$, $\va,\vc\in\sR^{m}$
and $\mW=(\vw_1,\ldots,\vw_m)^\T\in\sR^{m\times d}$, and 
$\sigma(z)=\max(z,0)$ ($z\in\sR$) is the activation function of ReLU. Note that this two-layer model is slightly different from the model in (\ref{relunn}) for easy calculation in \cite{zhang2019explicitizing,luo2019on}.      The target
function is denoted by $f(\vx)$. The network is trained by  mean-squared error (MSE) loss function 
\begin{equation}
  L=\int_{\sR^d}\frac{1}{2}\abs{f(\vx,\vtheta)-f(\vx)}^{2}\rho(\vx) \diff{\vx}, \label{eq:mseloss}
\end{equation}
where $\rho(\vx)$ is a probability density. 
 Considering finite samples, we have 
 \begin{equation}
     \rho(\vx)=\frac{1}{n}\sum_{i=1}^n\delta(\vx-\vx_i).
 \end{equation}
 For any function $g$ defined on $\sR^d$, we use the following convention of the Fourier transform and its inverse:
\begin{equation*}
    \textstyle{  
    \fF[g](\vxi)=\int_{\sR^d}g(\vx)\E^{-2\pi\I \vxi^\T\vx}\diff{\vx},\quad
    g(\vx)=\int_{\sR^d}\fF[g](\vxi)    \E^{2\pi\I \vx^\T\vxi}\diff{\vxi},
    }
\end{equation*}
where $\vxi\in\sR^{d}$ denotes the frequency. 

The study in \cite{zhang2019explicitizing,luo2019on} shows that when the neuron number $m$ is sufficient large, training the network in (\ref{eq:relunn2}) with gradient descent is described by the following differential equation
\begin{equation}
\partial _t \fF[h](\vxi) = -\Gamma(\vxi) \fF[(h-f)\rho](\vxi) \label{gdform}
\end{equation}
 with initial $\fF[h_{{\rm ini}}](\vxi)$. 
The long time solution of \eqref{gdform} is equivalent to solve the following optimization problem
\begin{align}
    &\min_{h-h_{\mathrm{ini}}\in F_{\gamma}}\int_{\sR^d}\Gamma^{-1}(\vxi)\abs{\fF[h-h_{{\rm ini}}](\vxi)}^{2}\diff{\vxi},\label{eq: minFPnorm}\\
    &{\rm s.t. } \quad  h(\vx_{i})=f(\vx_i)  \quad  {\rm for}  \quad  i=1,\cdots,n,  \label{sampleconstraint}  
\end{align}
where 
\begin{equation}
    \Gamma(\vxi)=\frac{\frac{1}{m}\sum_{j=1}^{m}\left(\norm{\vw_{j}(0)}^{2}+a_{j}(0)^{2}\right)}{\norm{\vxi}^{d+3}}+\frac{4\pi^{2}\frac{1}{m}\sum_{j=1}^{m}\left(\norm{\vw_{j}(0)}^{2}a_{i}(0)^{2}\right)}{\norm{\vxi}^{d+1}},
\end{equation}
here $\norm{\cdot}$ represents the $L^2$-norm, $w_{j}(0)$ and $a_{j}(0)$ represent initial parameters before training, and 
\begin{equation}
    F_{\gamma}=\{h|\int_{\sR^d}\Gamma^{-1}(\vxi)\abs{\fF[h](\vxi)}^{2}\diff{\vxi}<\infty\}.
\end{equation}
Since $\Gamma(\vxi)$ monotonically decreases with $\vxi$, the gradient flow in (\ref{gdform}) rigorously defines the F-Principle, i.e., low frequency converges faster. The minimization in (\ref{eq: minFPnorm}) clearly shows that the DNN has an implicit bias in addition to the sample constraint in (\ref{sampleconstraint}). As $(\Gamma(\vxi))^{-1}$ monotonically increases with $\vxi$, the optimization problem prefers to choose a function that has less high frequency components, which explicates the implicit bias of the F-Principle --- DNN prefers low frequency  \cite{xu2019training,xu2019frequency}.

\section{Main results}

\subsection{R-G method in solving PDE}

In the classical case, $f(\bm{x})$ is a given function, so we can compute exactly the integral on the right-hand side of the R-G equation \eqref{RG}. As the number of basis functions $m$ approaches infinity, the numerical solution obtained by R-G method \eqref{RG} approximates the exact solution of problem \eqref{Model}.  It is interesting to ask if we only have the information of $f$ at the finite $n$ points, what could happen to numerical solution obtained by \eqref{VarEqM} when $m \rightarrow \infty$?

Fixing the number of sample points $n$, we study the property of the solution of the numerical method \eqref{VarEqM}. We have the following theorem.
%
%

\begin{thm}\label{thm1}
When $m\rightarrow \infty$, the numerical method \eqref{VarEqM} is solving the problem
\begin{eqnarray}\label{Th1Eq}
\left\{\begin{array}{c}
\displaystyle -\Delta u(\bm{x}) = \frac{1}{n}\sum_{i=1}^n \delta(\bm{x}-\bm{x}_i)f(\bm{x}_i), \quad \bm{x}\in \Omega, \\
 u(\bm{x})=0, \quad \bm{x} \in \partial\Omega,
\end{array}
\right.
\end{eqnarray}
where $\delta(x)$ represents the Dirac delta function.
\end{thm}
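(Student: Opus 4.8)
The plan is to recognize the Monte-Carlo right-hand side of \eqref{VarEqM} as a distributional pairing, and then appeal to the standard convergence theory of the Galerkin method. First I would introduce the distribution
$$
g(\bm{x}) = \frac{1}{n}\sum_{i=1}^n f(\bm{x}_i)\,\delta(\bm{x}-\bm{x}_i),
$$
a weighted sum of Dirac masses located at the sample points. For any test function $\phi_j$ that is continuous at the $\bm{x}_i$, the action of $g$ is $\langle g,\phi_j\rangle = \frac{1}{n}\sum_{i=1}^n f(\bm{x}_i)\phi_j(\bm{x}_i)$, which is exactly the right-hand side of \eqref{VarEqM}. Consequently \eqref{VarEqM} is nothing but the Galerkin system $\sum_k c_k\, a(\phi_k,\phi_j)=\langle g,\phi_j\rangle$ attached to the variational problem of finding $u_h\in U_h$ with $a(u_h,v)=\langle g,v\rangle$ for all $v\in U_h$. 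In other words, \eqref{VarEqM} is precisely the R-G discretization \eqref{RG} of the Poisson problem whose source has been replaced by $g$.

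With this identification in hand, I would invoke the convergence of the R-G (Galerkin) method as the basis number grows. The approximating subspaces $U_h$ with basis $\{\phi_1,\ldots,\phi_m\}$ are constructed so that their union is dense in $H_0^1(\Omega)$ as $m\to\infty$, and the bilinear form $a$ is bounded and coercive on $H_0^1(\Omega)$; hence a C\'ea-type estimate yields $\norm{u_h-u}_1\to 0$, where $u$ solves the continuous weak problem $a(u,v)=\langle g,v\rangle$ for all admissible $v$. Undoing the integration by parts that defines $a$, this continuous weak problem is exactly the distributional statement of $-\Delta u = g$ together with the homogeneous Dirichlet condition, i.e. problem \eqref{Th1Eq}. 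Thus in the limit $m\to\infty$ the scheme \eqref{VarEqM} reproduces the solution of \eqref{Th1Eq}, and the algebraic core of the theorem reduces to the delta-pairing observation above.

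The delicate point, and the main obstacle to a dimension-uniform argument, is whether $g$ defines a bounded functional on the energy space, since the Lax--Milgram framework requires $g\in H^{-1}(\Omega)$. In one dimension this is immediate: the Sobolev embedding $H_0^1(\Omega)\hookrightarrow C(\bar{\Omega})$ makes point evaluation $v\mapsto v(\bm{x}_i)$ a bounded linear functional, so each $\delta(\bm{x}-\bm{x}_i)\in H^{-1}(\Omega)$ and $u$ is a genuine $H_0^1$ function (in fact the piecewise linear profile described in the abstract). For $d\ge 2$ this embedding fails and the Dirac mass no longer lies in $H^{-1}(\Omega)$; the limiting problem \eqref{Th1Eq} must then be read distributionally, as a superposition of fundamental solutions (Green's functions) of $-\Delta$, and the limit $u$ enjoys only the reduced regularity noted in the abstract. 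I would therefore phrase the $H_0^1$ convergence cleanly for $d=1$ and interpret \eqref{Th1Eq} in the distributional sense for $d\ge 2$, keeping the delta-pairing identification—which is dimension-independent—as the substance of the proof.
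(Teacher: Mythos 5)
Your proposal is correct and follows essentially the same route as the paper's own proof: identify the Monte-Carlo right-hand side of \eqref{VarEqM} with the pairing of the delta-sum source against the basis functions (the ``filtering property''), so that \eqref{VarEqM} is exactly the Galerkin system for \eqref{Th1Eq}, and then invoke standard R-G convergence theory as $m\to\infty$. In fact your treatment is more careful than the paper's, which silently applies error-estimation theory without noting that for $d\ge 2$ the Dirac masses fail to lie in $H^{-1}(\Omega)$, so the limiting problem must be read distributionally---precisely the point you make explicit, and which is consistent with the singular behavior the paper observes in its 2D remark and experiments.
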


\begin{proof}
According to the filtering property of delta function, the formula for the integral on the right hand side (r.h.s.) of the equation \eqref{VarEqM} is the exact integral between the function of the r.h.s. of equation \eqref{Th1Eq} and the basis function $\phi_j(x)$, i.e.,
$$ \frac{1}{n}\sum_{i=1}^n f(\bm{x}_i) \phi_j(\bm{x}_i)=\int_{\Omega}\frac{1}{n}\sum_{i=1}^n f(\bm{x}_i) \delta(\bm{x}-\bm{x}_i)\phi_j(\bm{x}) d\bm{x},\quad j=1,2,\cdots.$$
Note that $\delta(x): \R^d \rightarrow \R$ is a continuous linear functional. According to the error estimation theory \cite{Brenner2008The,Shen2011Spectral}, the finite dimensional system \eqref{VarEqM} evidently approximates the problem \eqref{Th1Eq} when $m\rightarrow \infty$.
\end{proof}

{\bf Remark:} 
For the 1D case, the analytic solution to problem \eqref{Th1Eq} defined in $[a,b]$ can be given as  a piecewise linear function, namely 
\begin{eqnarray}\label{exactSol}
u(x)= \displaystyle \frac{1}{n}\sum_{i=1}^n f(x_i)(b-x_i)\frac{x-a}{b-a} -\frac{1}{n}\sum_{i=1}^n f(x_i)(x-x_i)H(x-x_i),
\end{eqnarray}
where $H(x)$ is the Heaviside step function
\begin{eqnarray*}
H(x)=\left\{ \begin{array}{cc}
0,&x<0,\\
1,&x\ge 0.
\end{array}
\right.
\end{eqnarray*} 
For the 2D case, \cite{Polyanin2002Handbook} gives the exact solution in $[0,a]\times[0,b]$ by Green's function
\begin{eqnarray}
u(x,y)=\frac{4}{nab}\sum_{i=1}^nf_i\sum_{k=1}^{\infty}\sum_{l=1}^{\infty}\frac{\sin(p_kx)\sin(q_ly)\sin(p_kx_i)\sin(q_ly_i)}{p_k^2+q_l^2}.
\end{eqnarray}
where $f_i=f(x_i,y_i), p_k=\pi k/a, q_l=\pi l/b$. We can prove that this series diverges at the sampling point $(x_i, y_i) (i=1,2,\cdots,n)$ and converges at other points. Therefore, the 2d exact solution $u(x,y)$ is highly singular.

\subsection{Numerical experiments}
Although R-G method and DNN method can be equivalent to each other in the sense of approximation, in this section, we present three examples to investigate the difference between the solution obtained by the R-G method and the one obtained by the DNN with gradient descent optimization. For simplicity, we consider the following 1D Poisson problem
\begin{eqnarray}\label{ExModel}
\left\{\begin{array}{c}
-u''(x)=f(x),\quad x\in(-1,1),\\
u(-1)=u(1)=0,
\end{array}
\right.
\end{eqnarray}
where we only know what the value of $f(x)$ is at $n$ points, i.e. $f(x_i)~(i=1,2,\cdots,n)$. In experiments, $f(x_i)$ are sampled from the function $f(x) =-(4x^3-6x)\exp(-x^2)$.

\bigskip 
\noindent {\bf Example 1:}  Fixing the number of sampling points $n=5$, we use R-G method and DNN method to solve the problem \eqref{ExModel}, respectively. The reason why we choose fewer sample points here is that in this situation we can investigate the property of the solution more clearly. The results are shown as follows.

\noindent{\bf R-G  method.} 
First, we use R-G method to solve the problem \eqref{ExModel}, specially the spectral method with the Fourier basis function given as 
$$\phi_i(x) = \sin(k\pi x),\quad  k = 1,2,\cdots,m.$$
We set the numbers of basis functions $m=5,10, 50, 500$, respectively. Fig. \ref{RGM5} plots the numerical solutions and the exact solution \eqref{exactSol} of problem \eqref{Th1Eq} with different $m$. One can see that the R-G solution approximates the piecewise linear function \eqref{exactSol}  when $m \rightarrow \infty$. This result is consistent with the solution property analyzed in Theorem \ref{thm1}.
%

 \begin{figure}
\centering
\begin{minipage}[]{\textwidth}
	\includegraphics[width=0.45\textwidth]{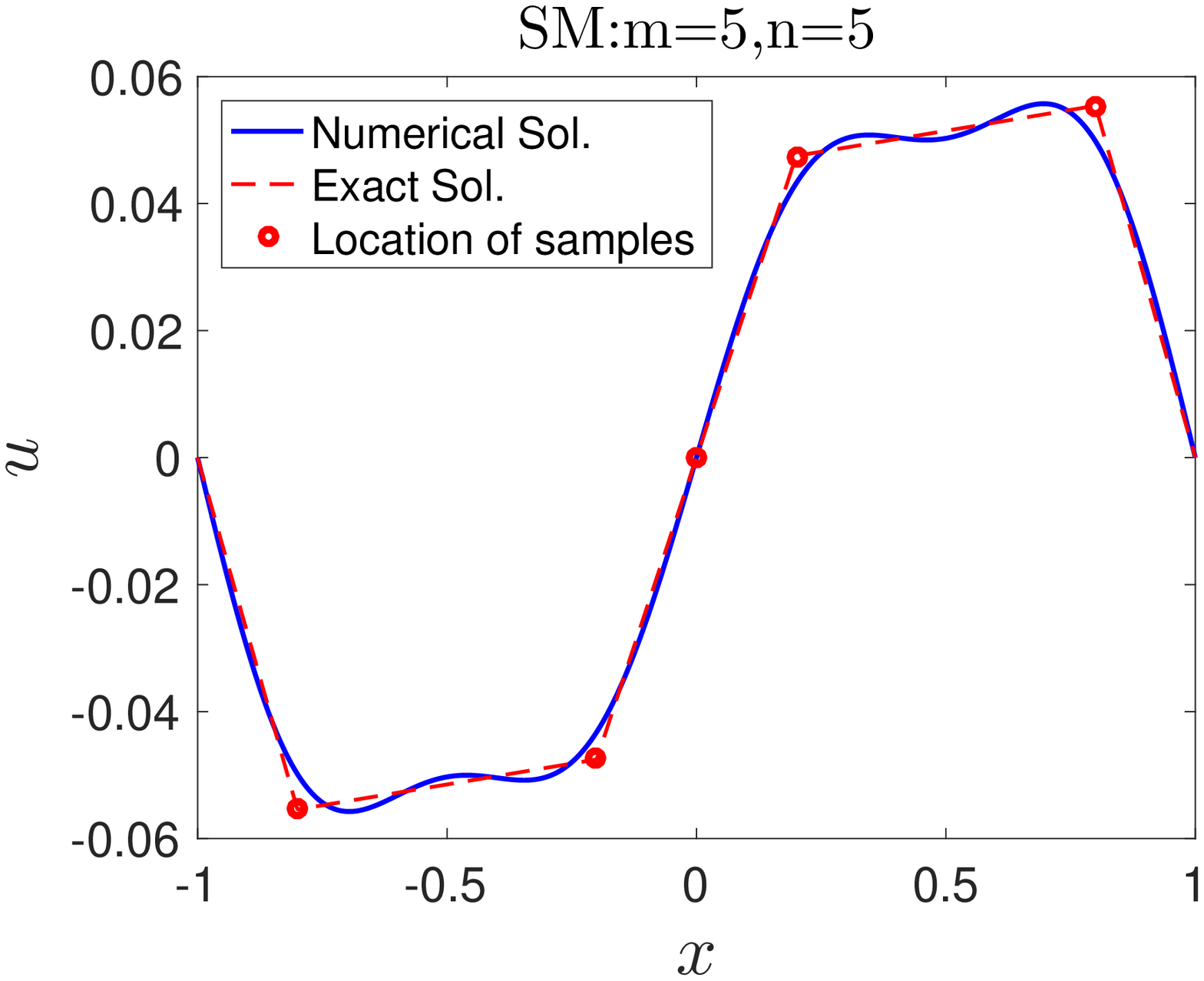} \hspace{0.2in} 
	\includegraphics[width=0.45\textwidth]{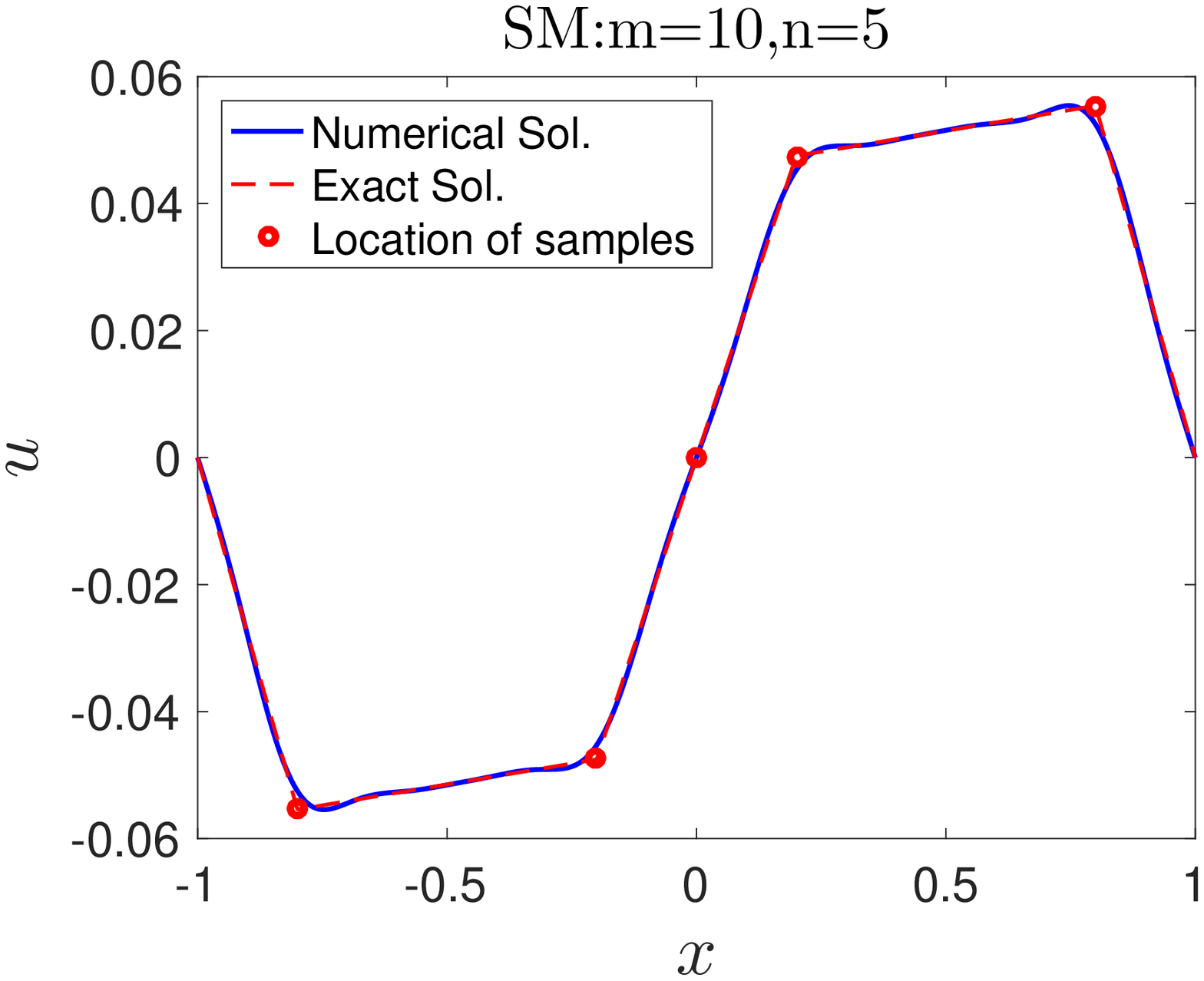} \vspace{0.1in}\\
	\includegraphics[width=0.45\textwidth]{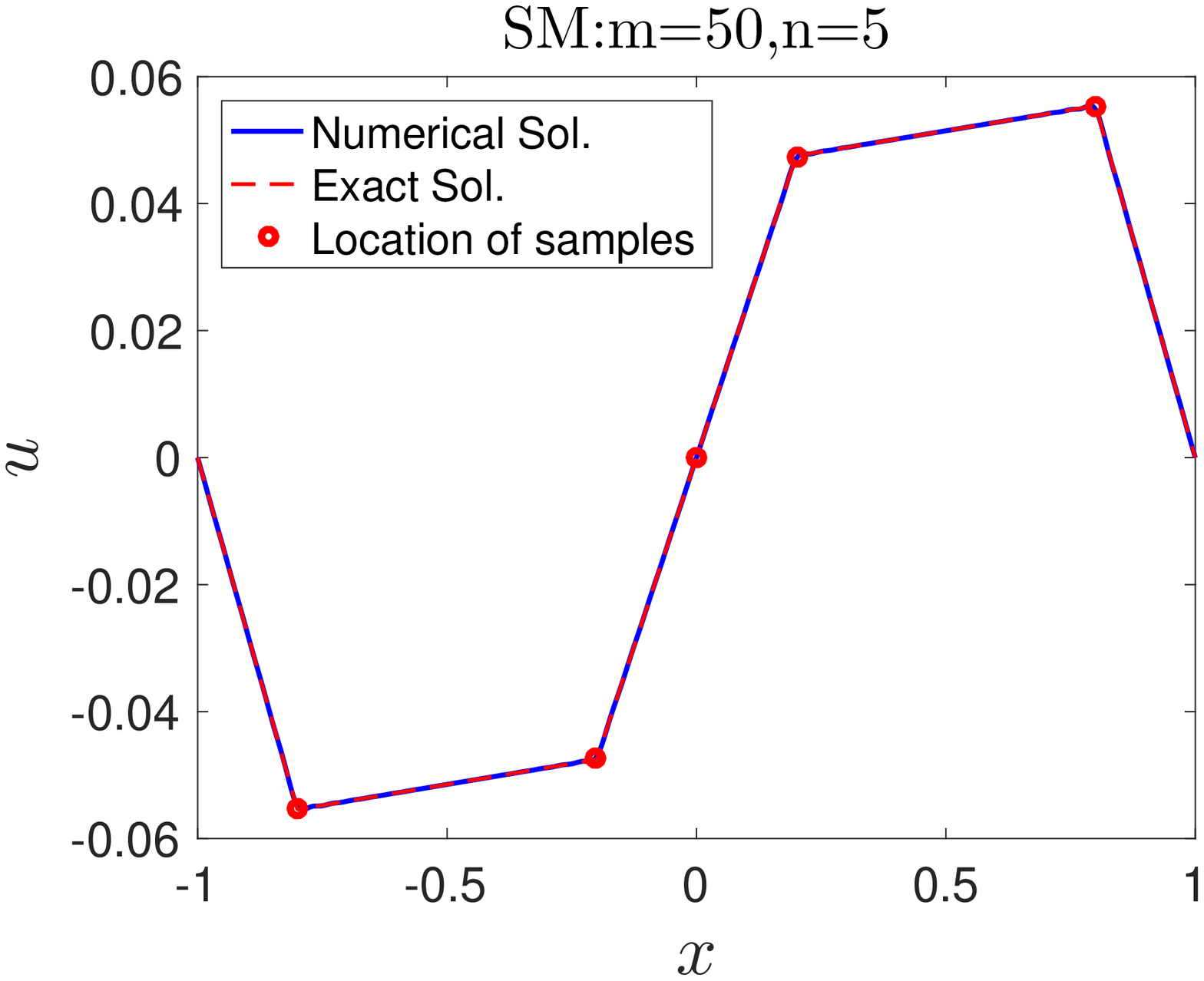} \hspace{0.2in} 
	\includegraphics[width=0.45\textwidth]{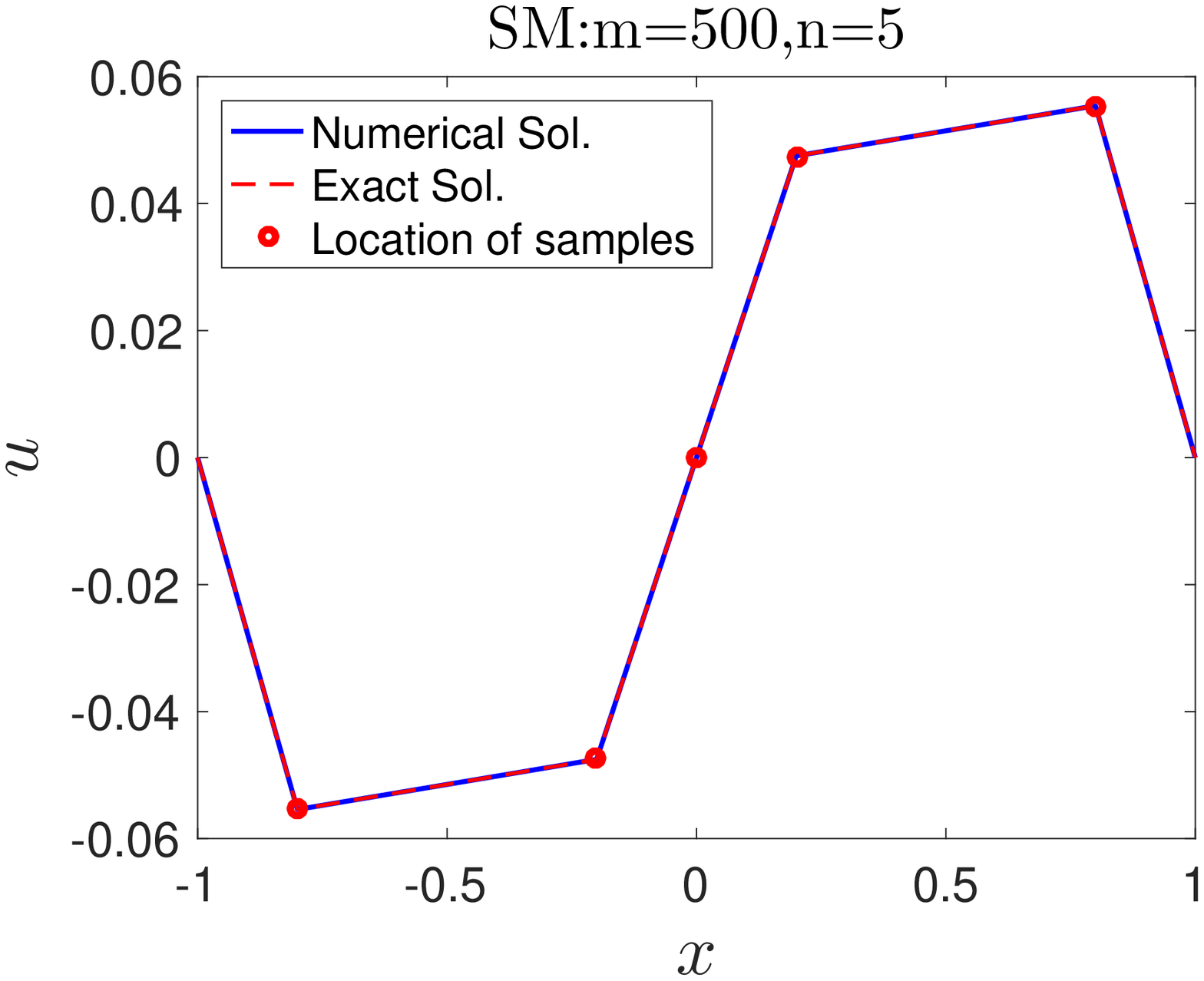}
\end{minipage}
\caption{(Example 1): Numerical solutions in SM with 5 sampling points.}
\label{RGM5}
\end{figure}

\noindent{\bf DNN method.}
For a better comparison with R-G method, we choose the activation function by $\sin(x)$ in DNN with one hidden layer. And the number of neurons are taken as $m=5,10, 50, 500$. The loss function \eqref{dnnloss} is selected with the parameter $\beta=10$.  We reduce the loss to an order of 1e-4, and take learning rate by 1e-4. And 1000 test points are used to plot the figures.
The DNN solutions are shown in Fig. \ref{DNNM5}, in which we observe that the DNN solutions are always smooth even when $m$ is very large.

 \begin{figure}
\centering
\begin{minipage}[]{\textwidth}
	\includegraphics[width=0.45\textwidth]{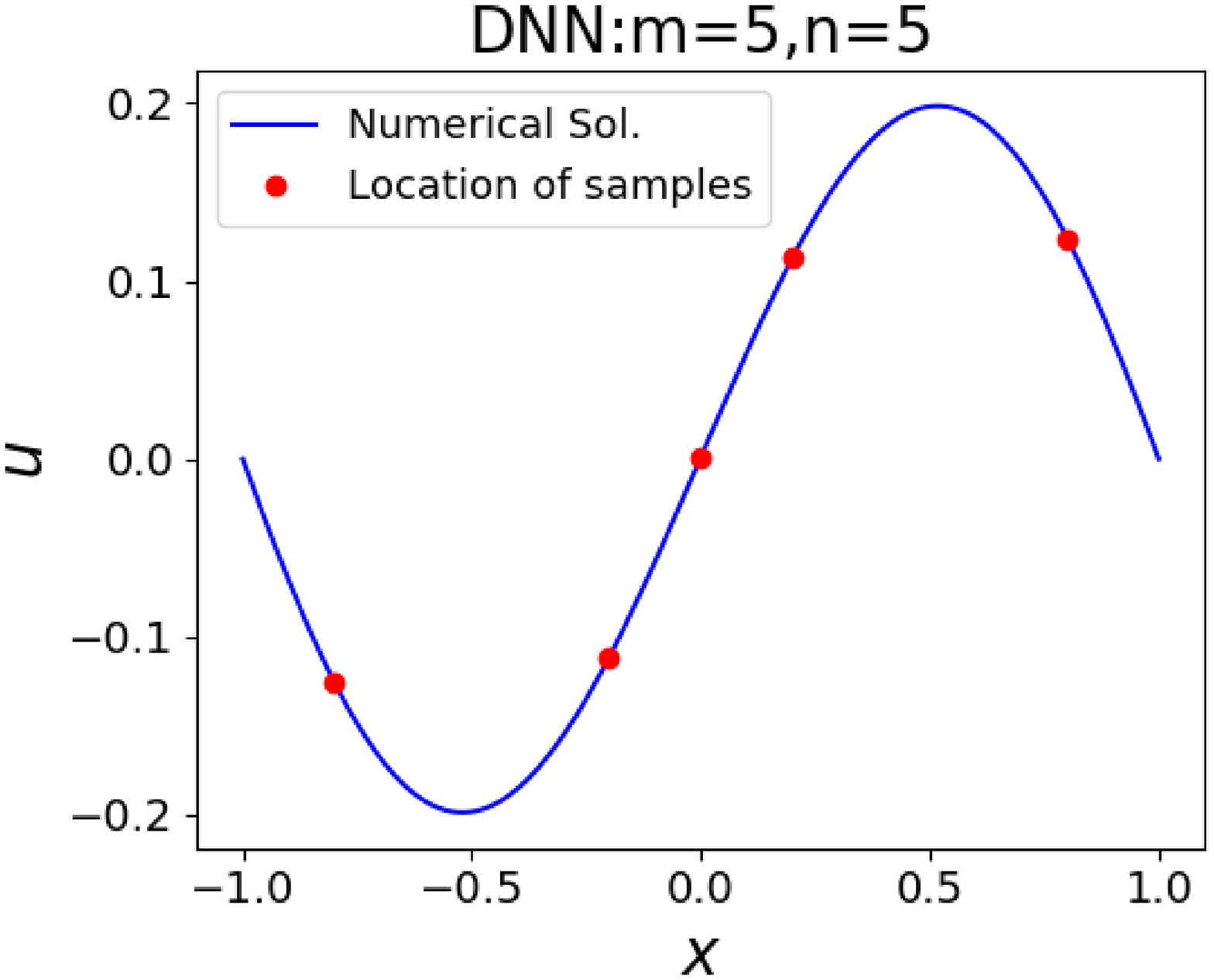} \vspace{0.1in}
	\includegraphics[width=0.45\textwidth]{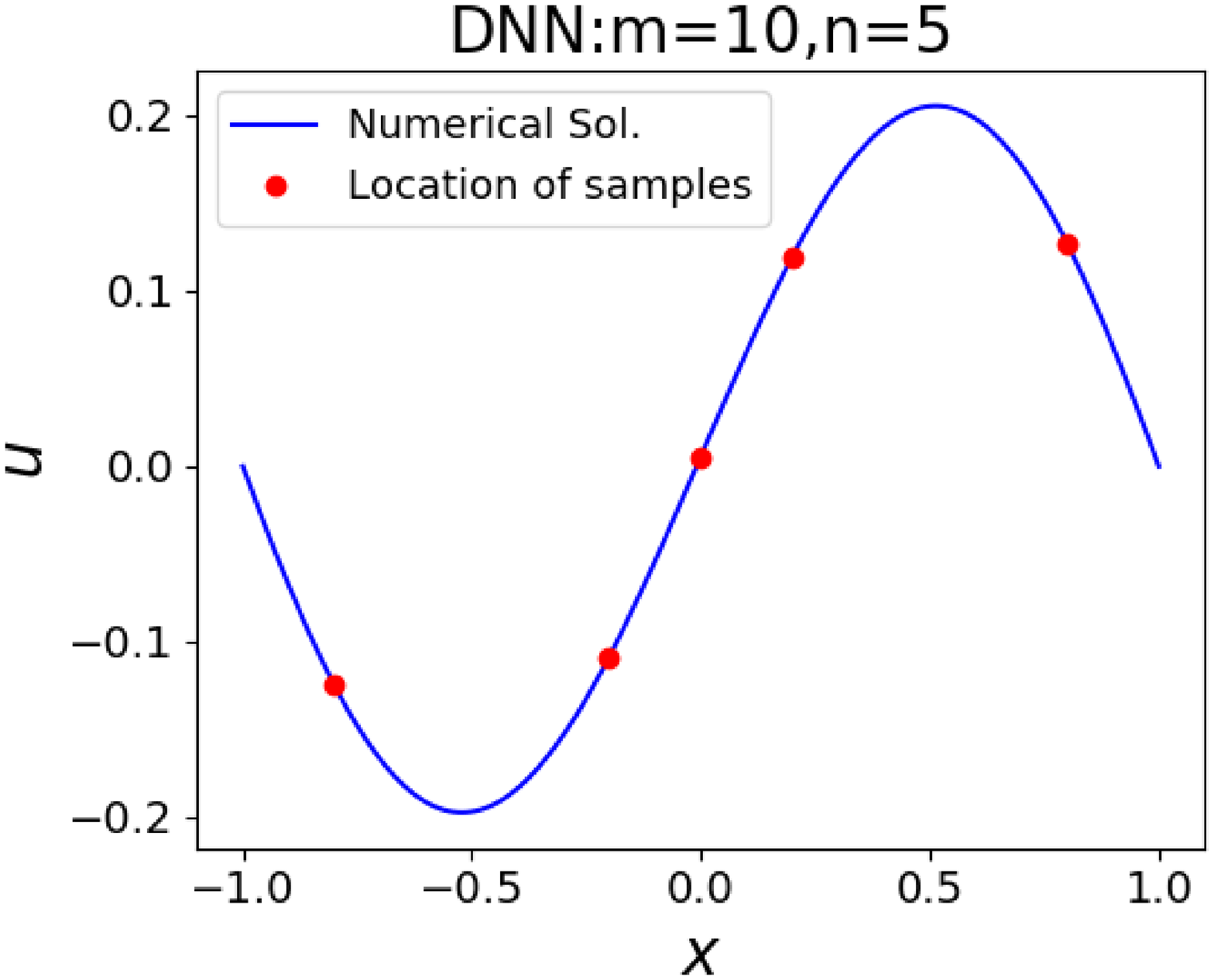} \vspace{0.1in}\\
	\includegraphics[width=0.45\textwidth]{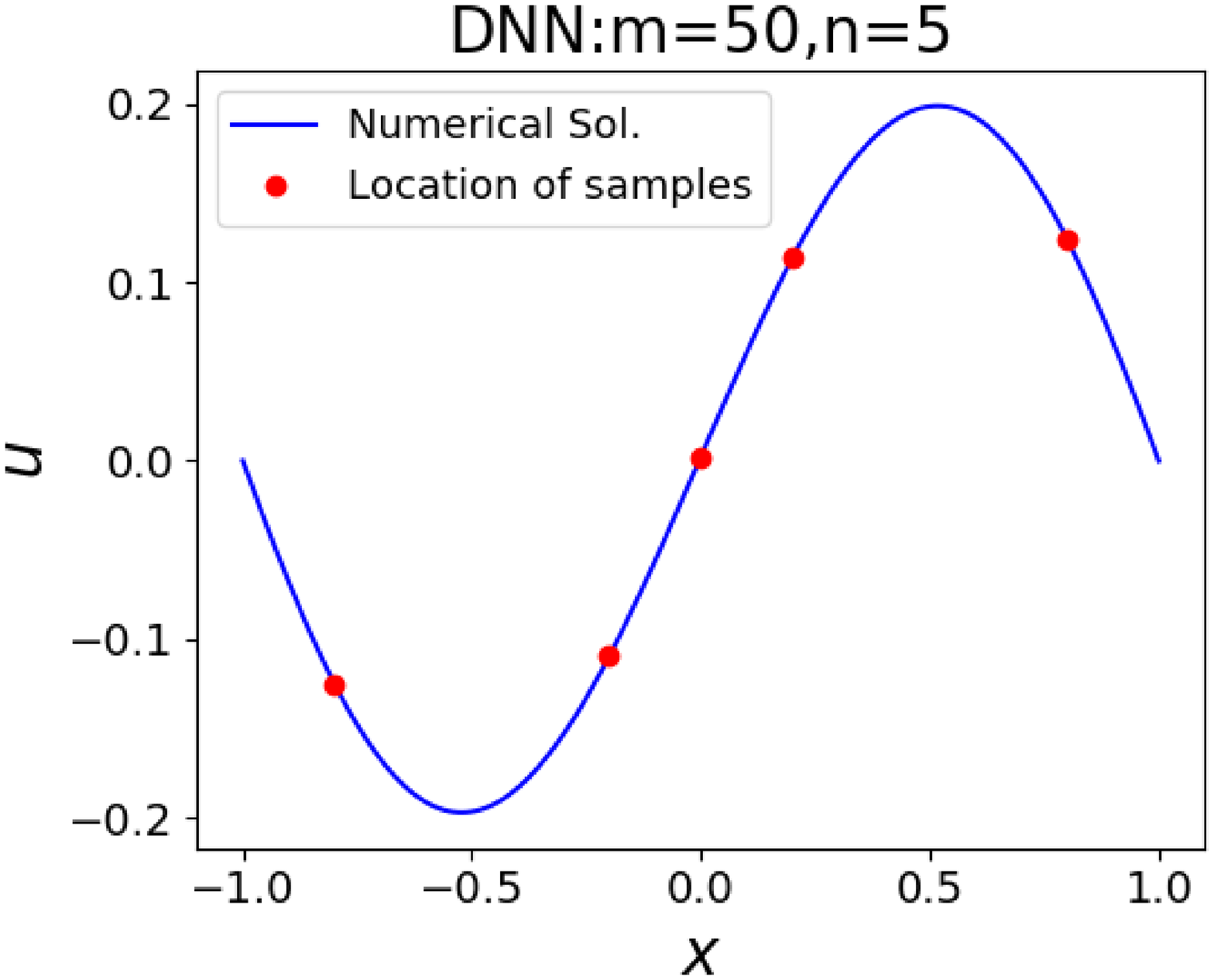} \vspace{0.1in}
	\includegraphics[width=0.45\textwidth]{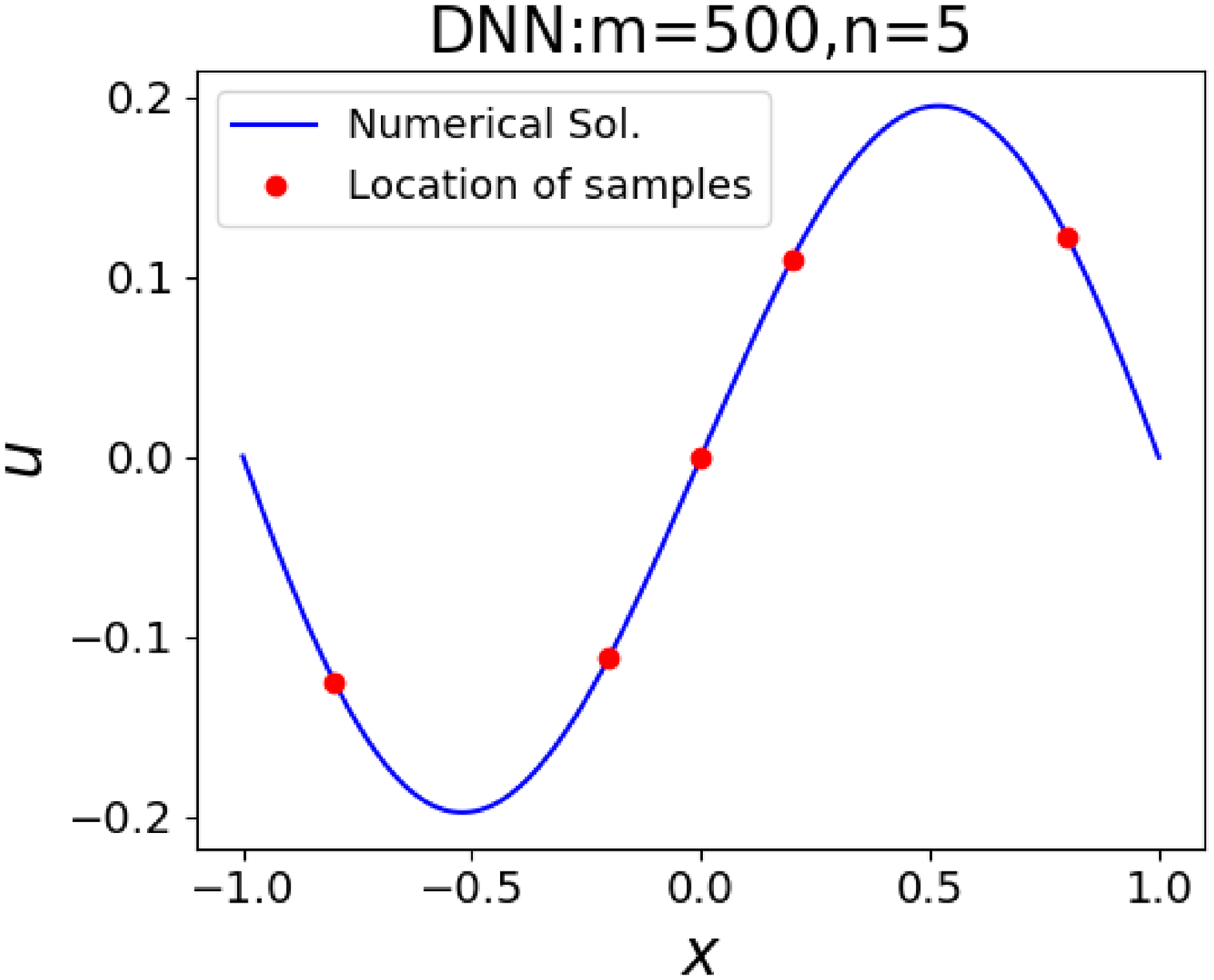} \vspace{0.1in}
\end{minipage}
\caption{(Example 1): Numerical solutions in DNN method with 5 sampling points.}
\label{DNNM5}
\end{figure}

\bigskip
\noindent{\bf Example 2:}
In this example, we use the ReLU function as the basis function in R-G method and the activation function in DNN method to repeat the experiments in Example 1. Here we randomly choose another 10 sampling points. 

Since the linear finite element function $\phi_j(x)$ can be expressed by ReLU functions for one dimensional case, namely, 
\begin{equation*}
\phi_j(x) = \frac{1}{h_{j-1}}\text{ReLU}(x-x_j)-(\frac{1}{h_{j-1}}+\frac{1}{h_j})\text{ReLU}(x-x_j)+\frac{1}{h_{j}}\text{ReLU}(x-x_{j+1}),
\end{equation*}
where $h_j=x_{j+1}-x_{j}$, we can use ReLU as the basis function for R-G method. For convenience, we just use the linear finite element function $\phi_j(x)$ instead of ReLU function as the basis function. Fig. \ref{FEMM5} shows that there is no doubt that the FEM solution approximates the piecewise linear solution \eqref{exactSol}.

In DNN method, we choose the number of neurons $m=5, 10,50,500$, respectively. And we use the variational from of the loss function \eqref{dnnloss_variation} because of the second order derivative of ReLU function is always zero. Fig. \ref{DNNrelu} shows that the DNN learns the data as a relatively smoother function than the R-G method.

\begin{figure}
\centering
\begin{minipage}[]{\textwidth}
	\includegraphics[width=0.45\textwidth]{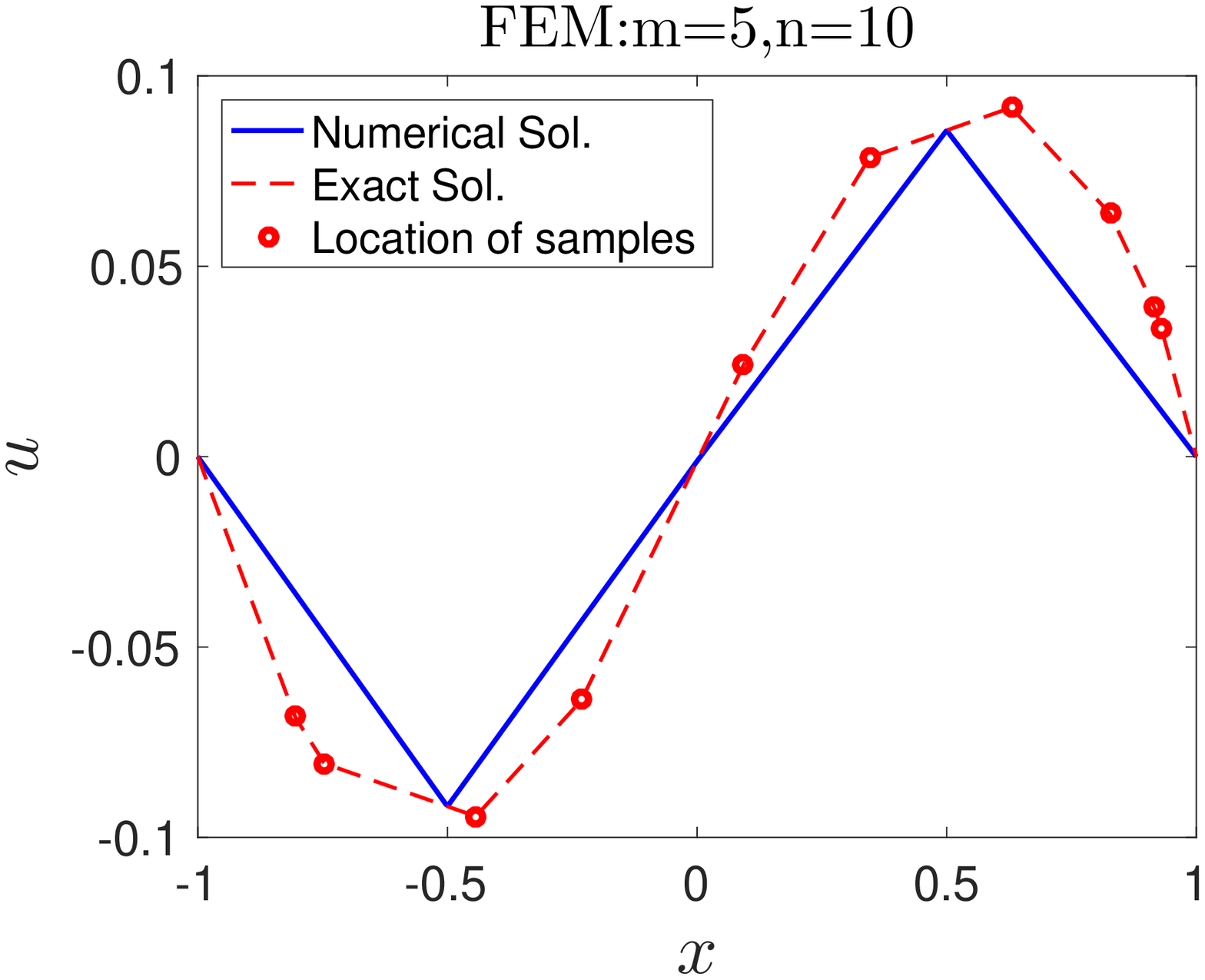} \hspace{0.2in} 
	\includegraphics[width=0.45\textwidth]{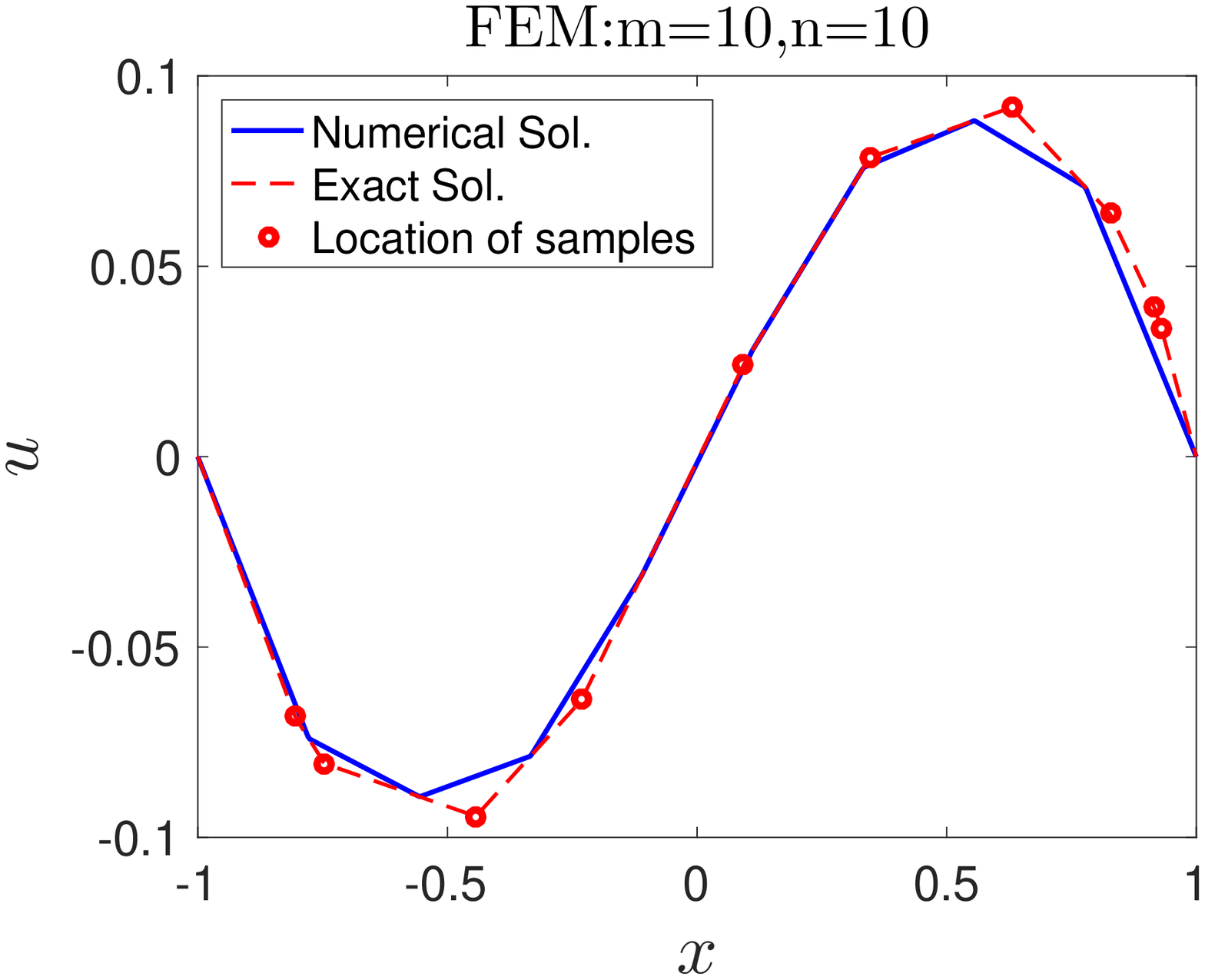} \vspace{0.1in}\\
	\includegraphics[width=0.45\textwidth]{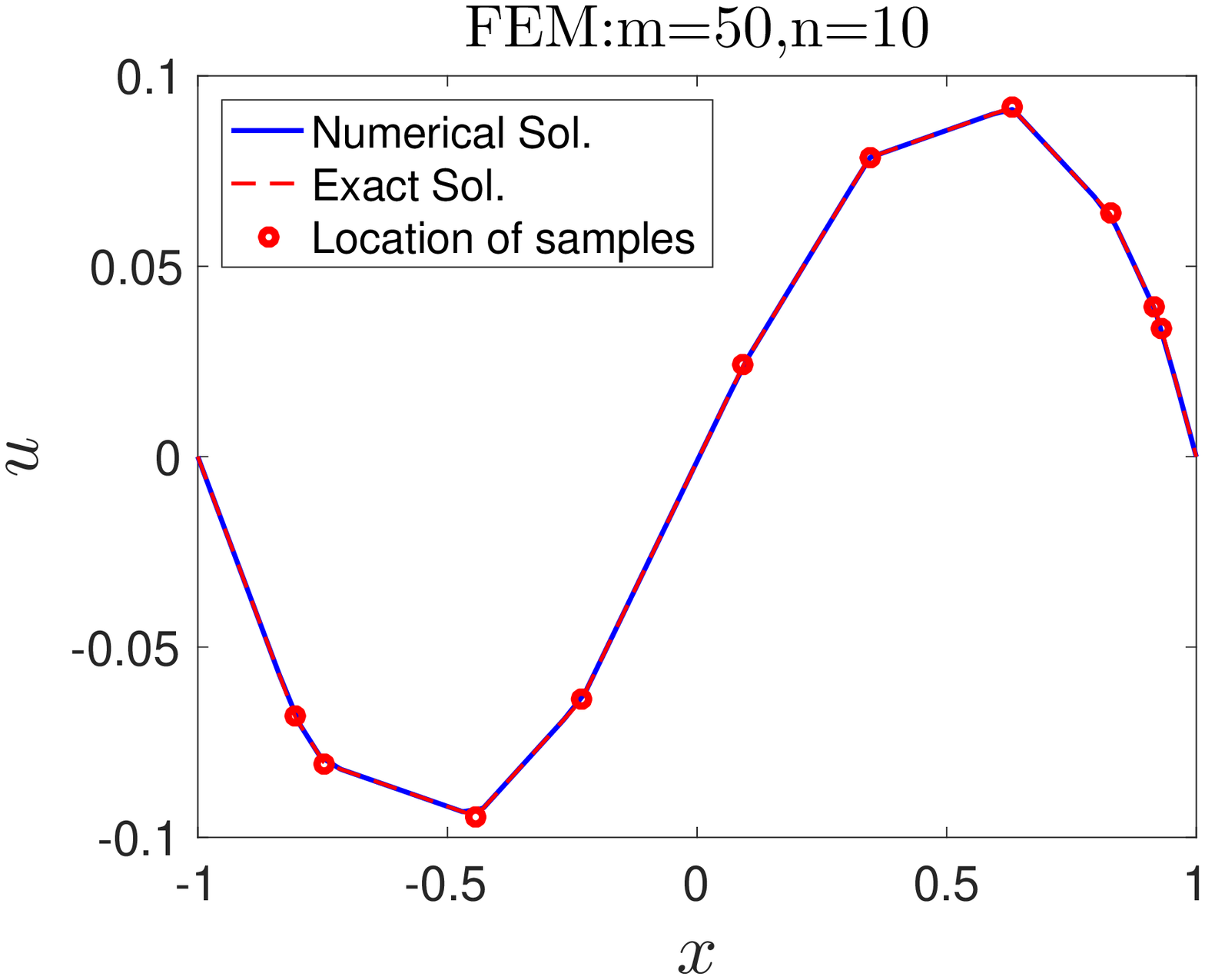} \hspace{0.2in} 
	\includegraphics[width=0.45\textwidth]{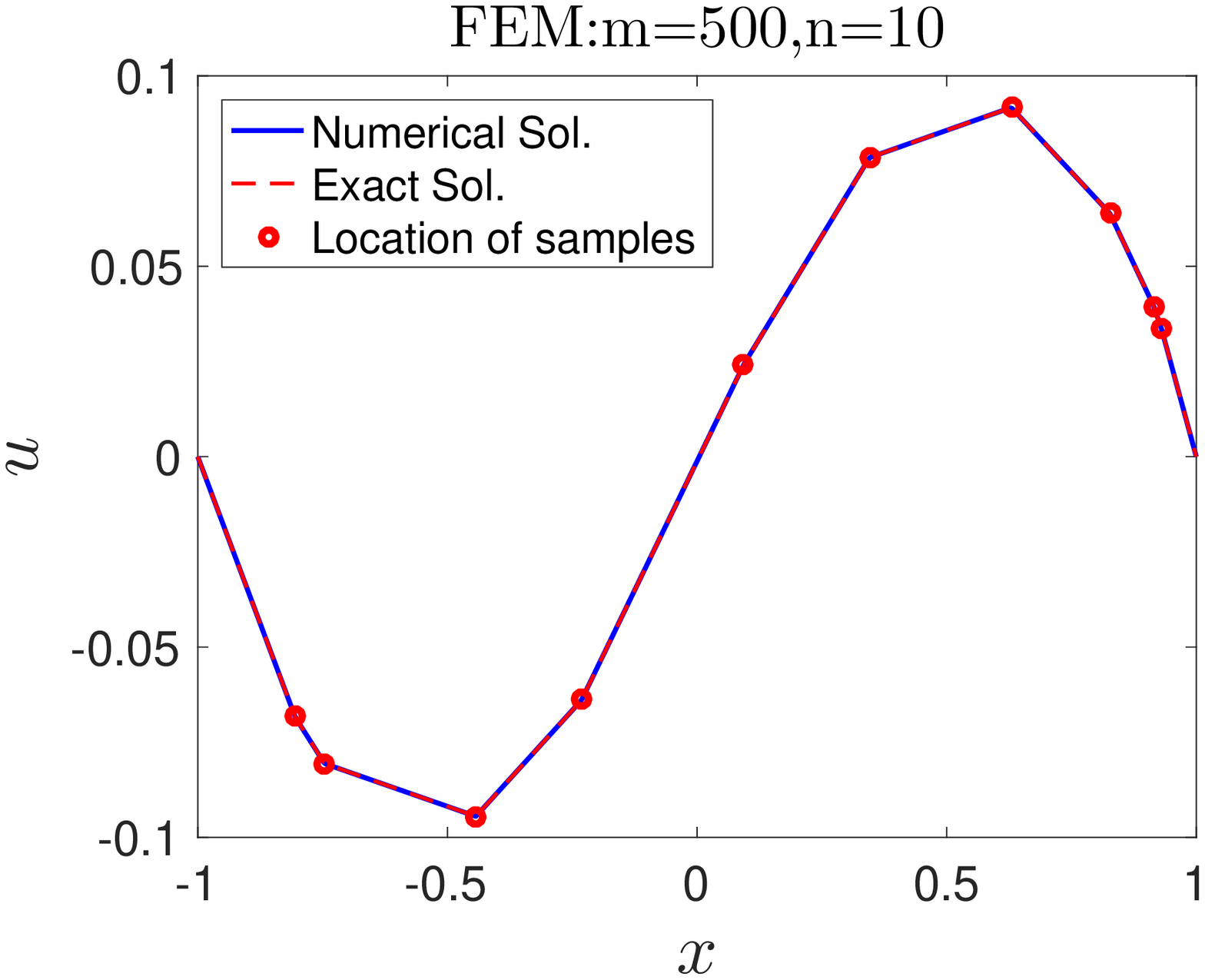}
\end{minipage}
\caption{(Example 2): Numerical solutions in FEM with 10 sampling points.}
\label{FEMM5}
\end{figure}

 \begin{figure}
\centering
\begin{minipage}[]{\textwidth}
	\includegraphics[width=0.45\textwidth]{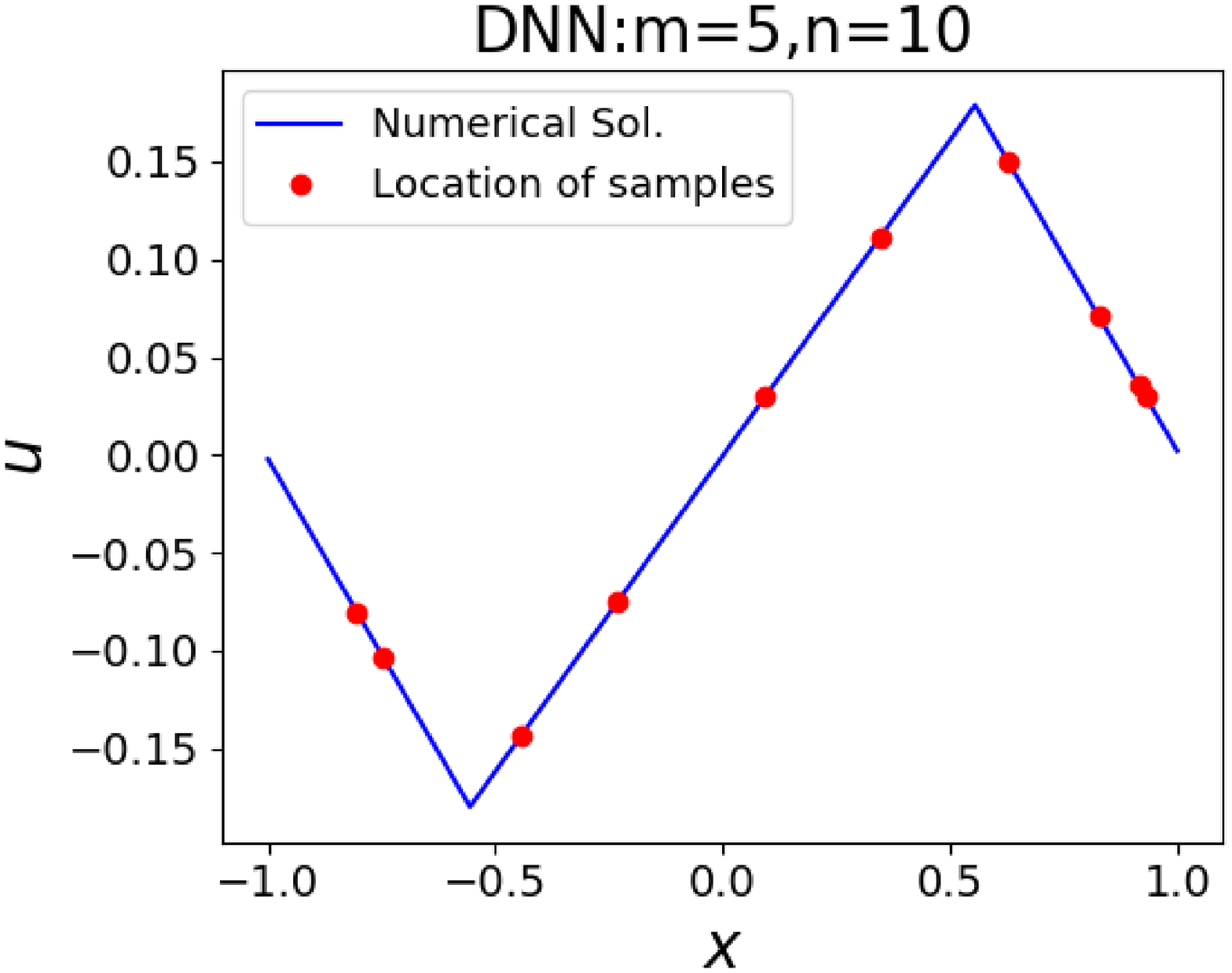} \vspace{0.1in} 
	\includegraphics[width=0.45\textwidth]{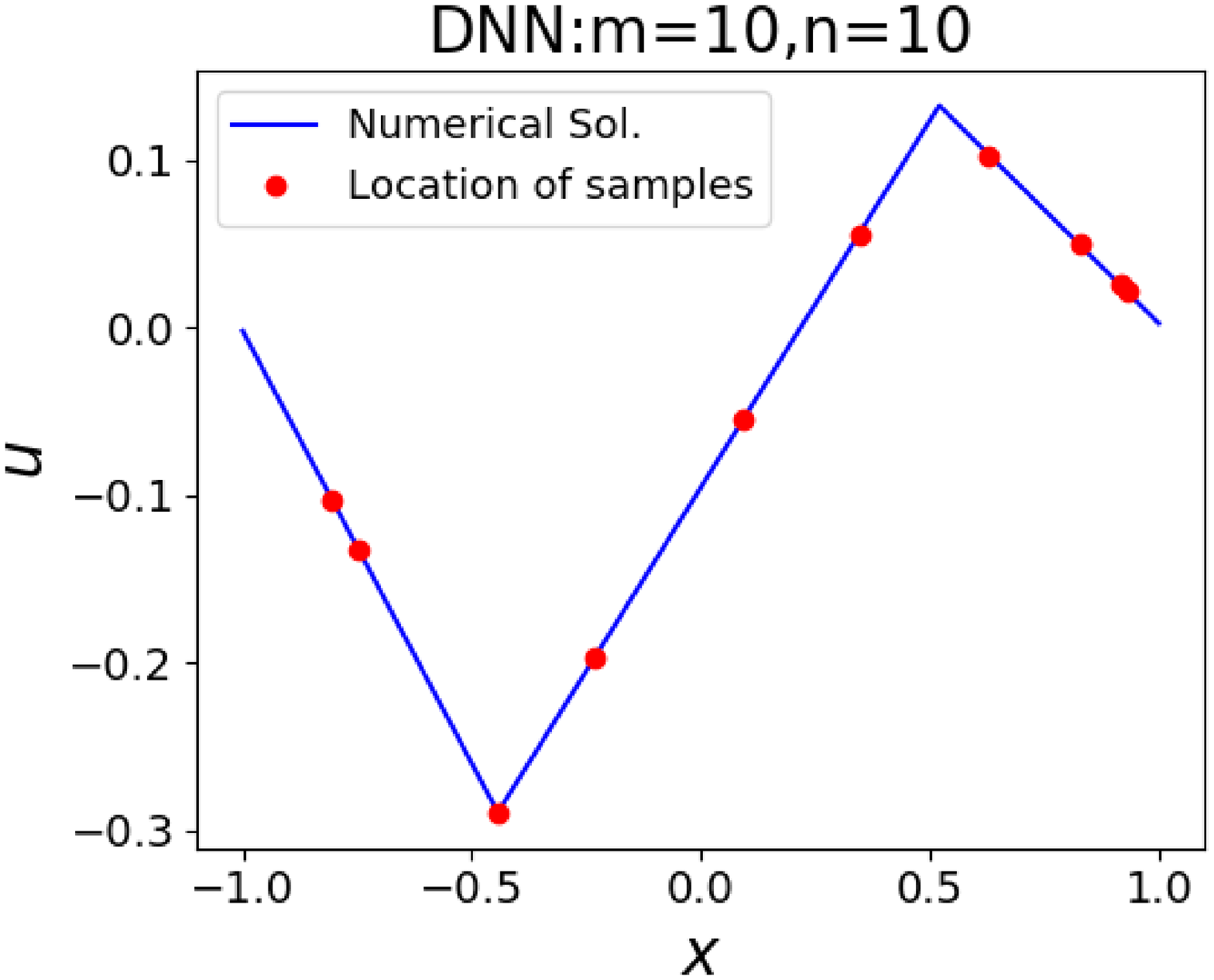} \vspace{0.1in}\\ 
	\includegraphics[width=0.45\textwidth]{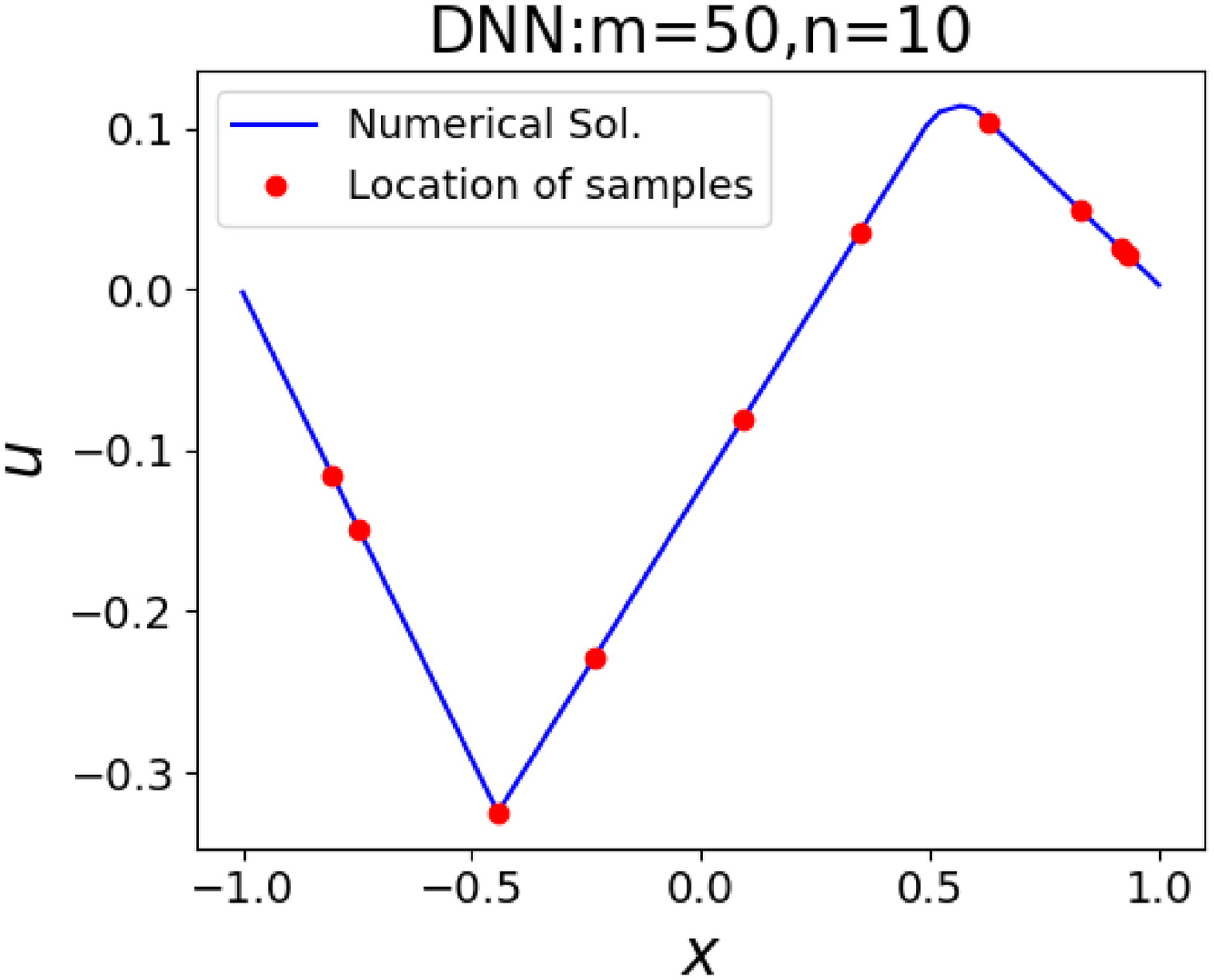} \vspace{0.1in} 
	\includegraphics[width=0.45\textwidth]{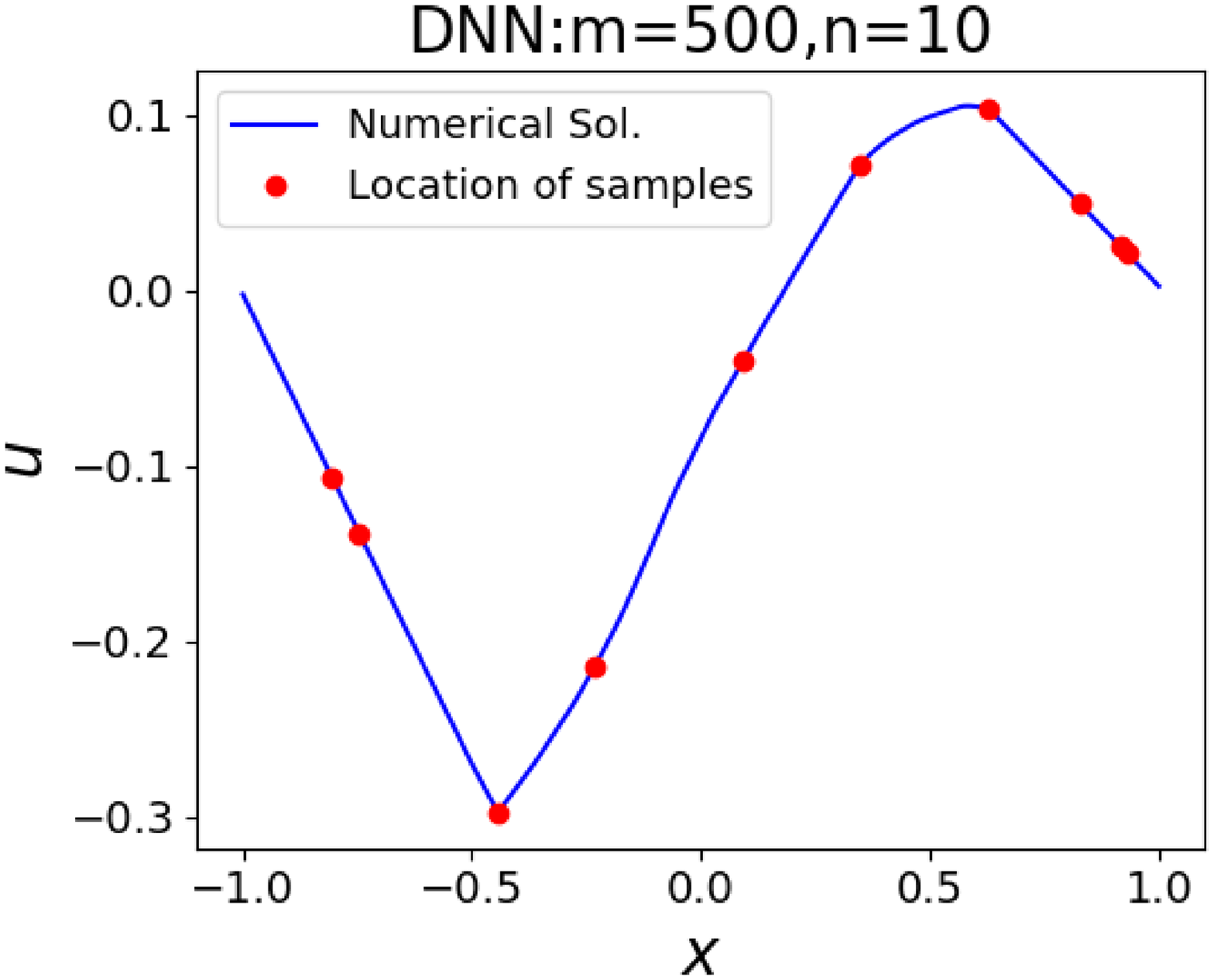} \vspace{0.1in}
\end{minipage}
\caption{(Example 2): Numerical solutions in DNN method with ReLU activation function and 10 sampling points.}
\label{DNNrelu}
\end{figure}

\bigskip
\noindent{\bf Example 3:} We  consider the 2D case 
\begin{eqnarray*}
\left\{\begin{array}{c}
 - \Delta u(\bm{x})=f(\bm{x}),\quad \bm{x}\in(0,1)^2,\\
 u(\bm{x})=0, \quad \bm{x} \in \partial(0,1)^2,
\end{array}
\right.
\end{eqnarray*}
where $\vx=(x, y)$ and we know the values of $f$ at $n$ points sampled from the function $f(\vx):=f(x, y) = 2\pi^2\sin(\pi x)\sin(\pi y).$
We fix the number of sample points $n=5^2$. 
The sampling points in the $x$ direction and the $y$ direction are both at $x_h=[0.1,0.25,0.5,0.8,0.9]$. 
We test the solution with the number of basis $m=5,50,100,200$, respectively. Fig. \ref{2d} plots the R-G solutions with Legendre basis and piecewise linear basis function. It can be seen that the numerical solution is a function with strong singularity.  Fig. \ref{2d1} shows the profile of R-G solutions at $y=0.5$ for various $m$, in which we can see that the values of numerical solutions at the sampling points get larger and larger with the increase of $m$. However, Fig. \ref{2dDNN} shows that the DNN solutions are stable without singularity for large $m$. 

 \begin{figure}[ht!]
\centering
\begin{minipage}[]{\textwidth}
	\includegraphics[width=0.23\textwidth]{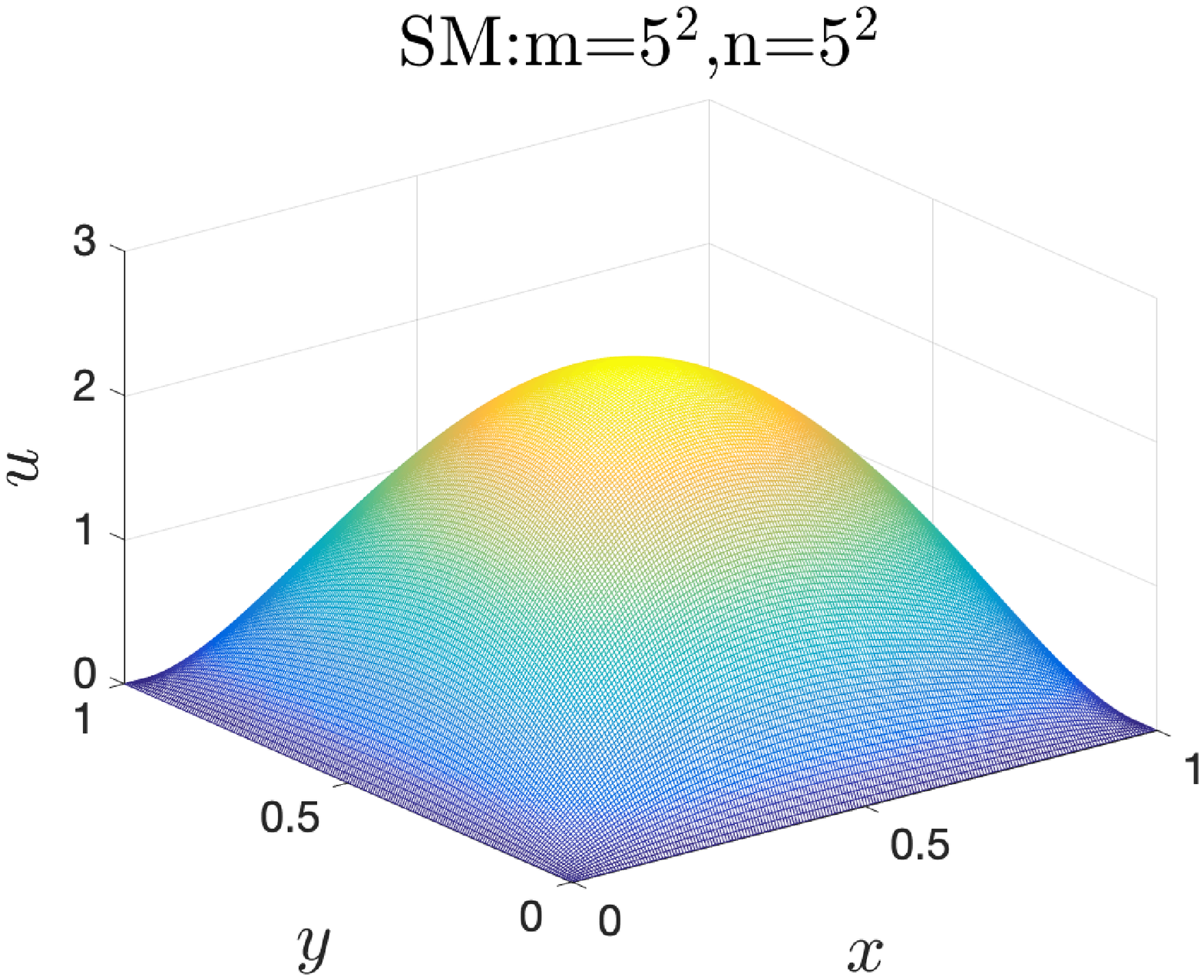}\hspace{0.1in} 
	\includegraphics[width=0.23\textwidth]{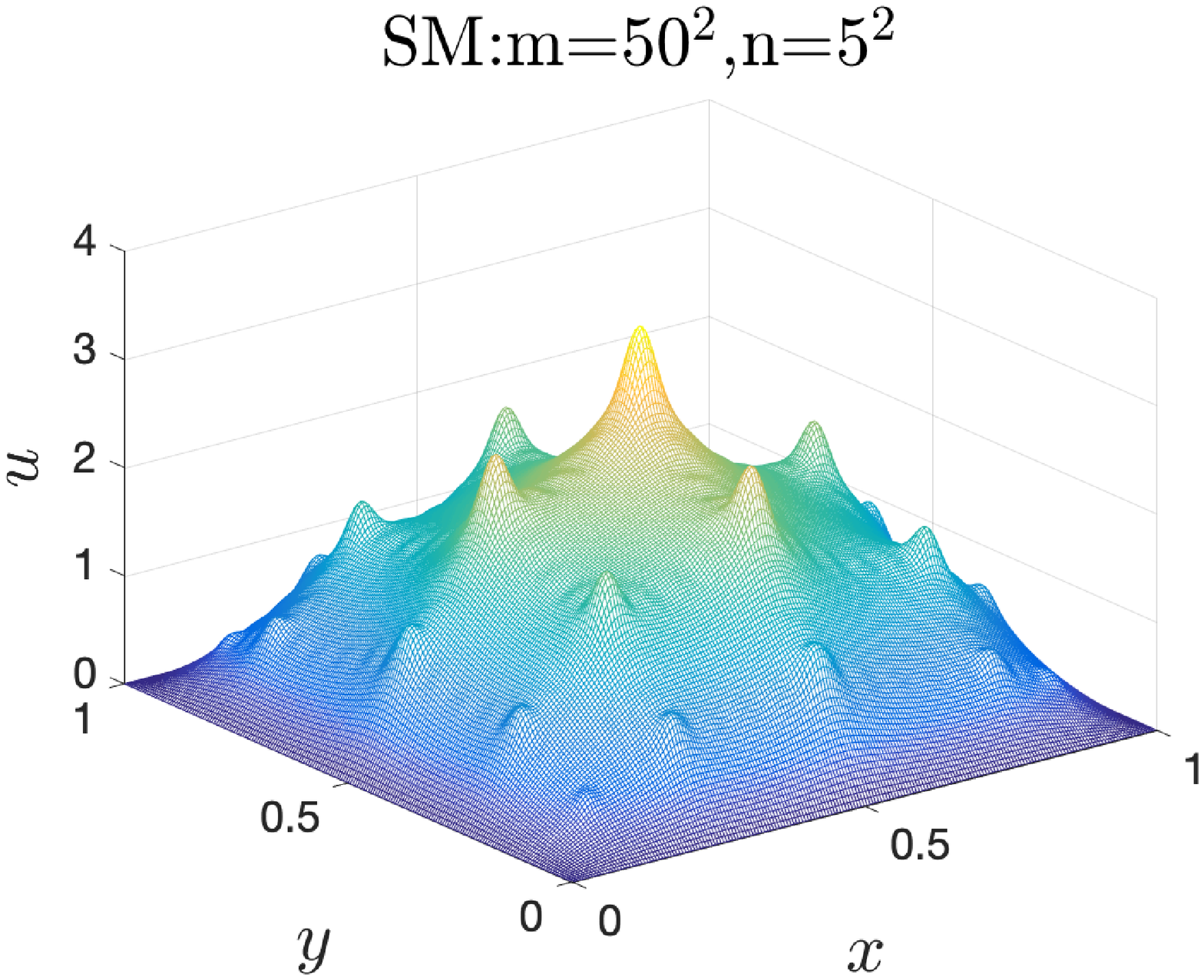}\hspace{0.1in}
	\includegraphics[width=0.23\textwidth]{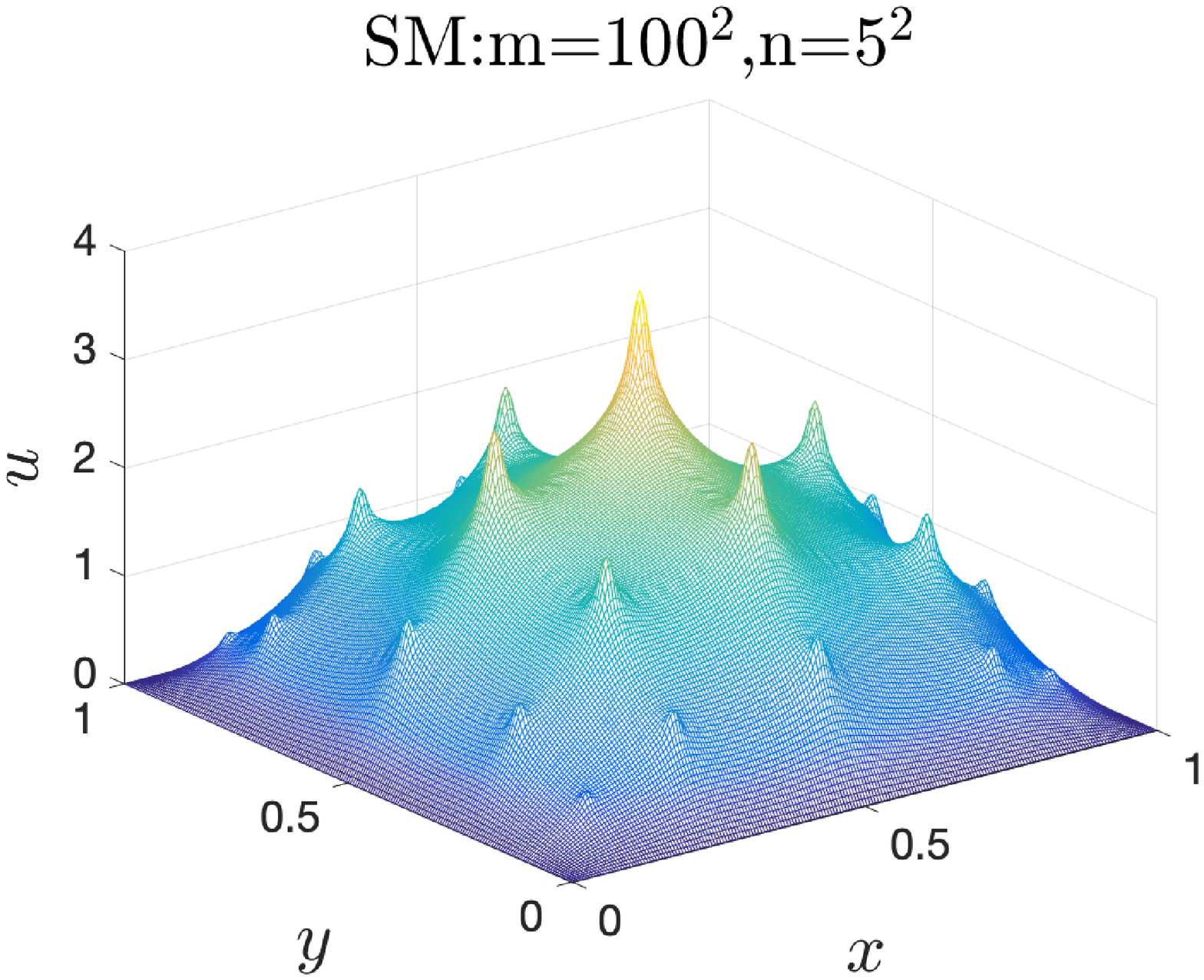}\hspace{0.1in} 
	\includegraphics[width=0.23 \textwidth]{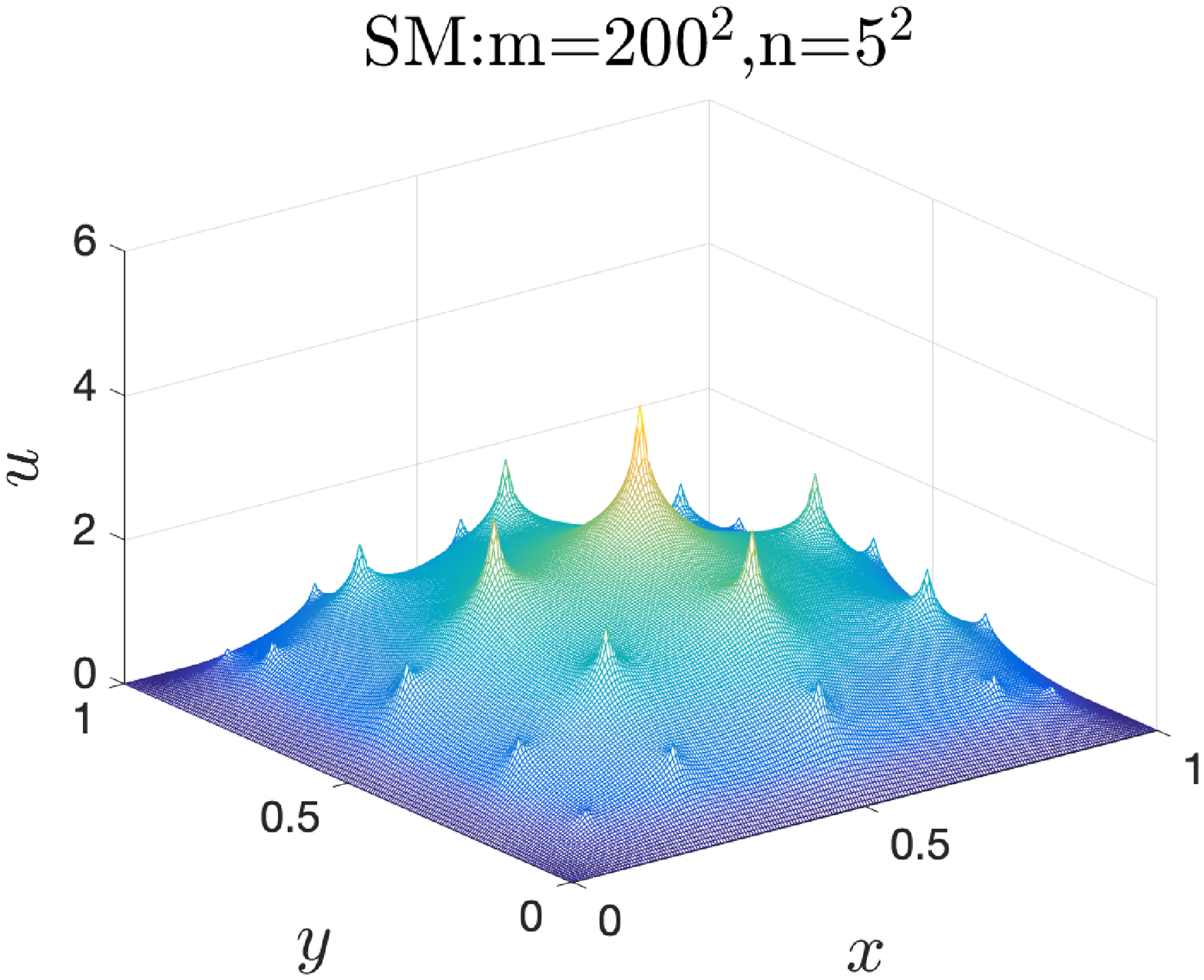}\vspace{0.2in} \\
	\includegraphics[width=0.23\textwidth]{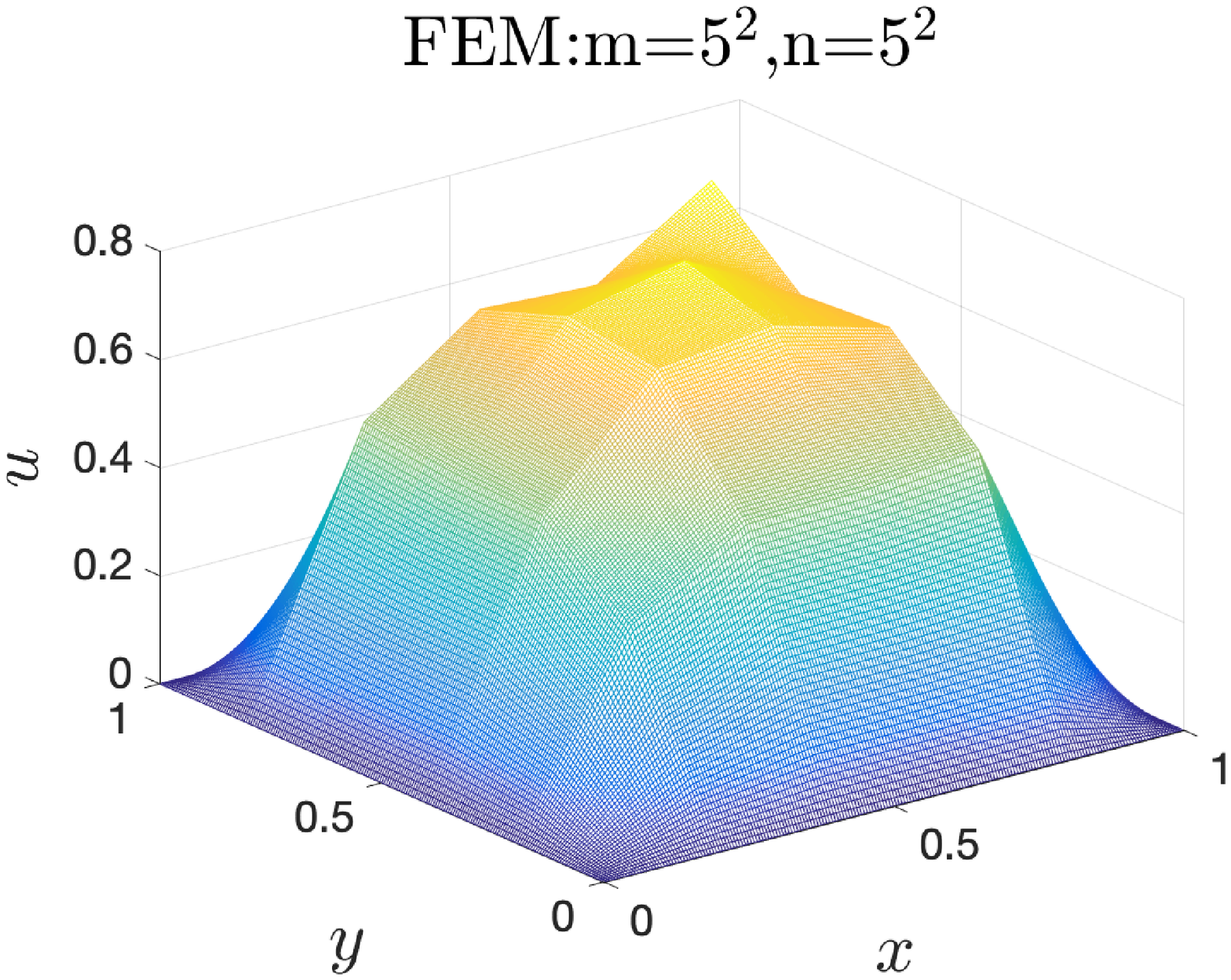}\hspace{0.1in} 
	\includegraphics[width=0.23\textwidth]{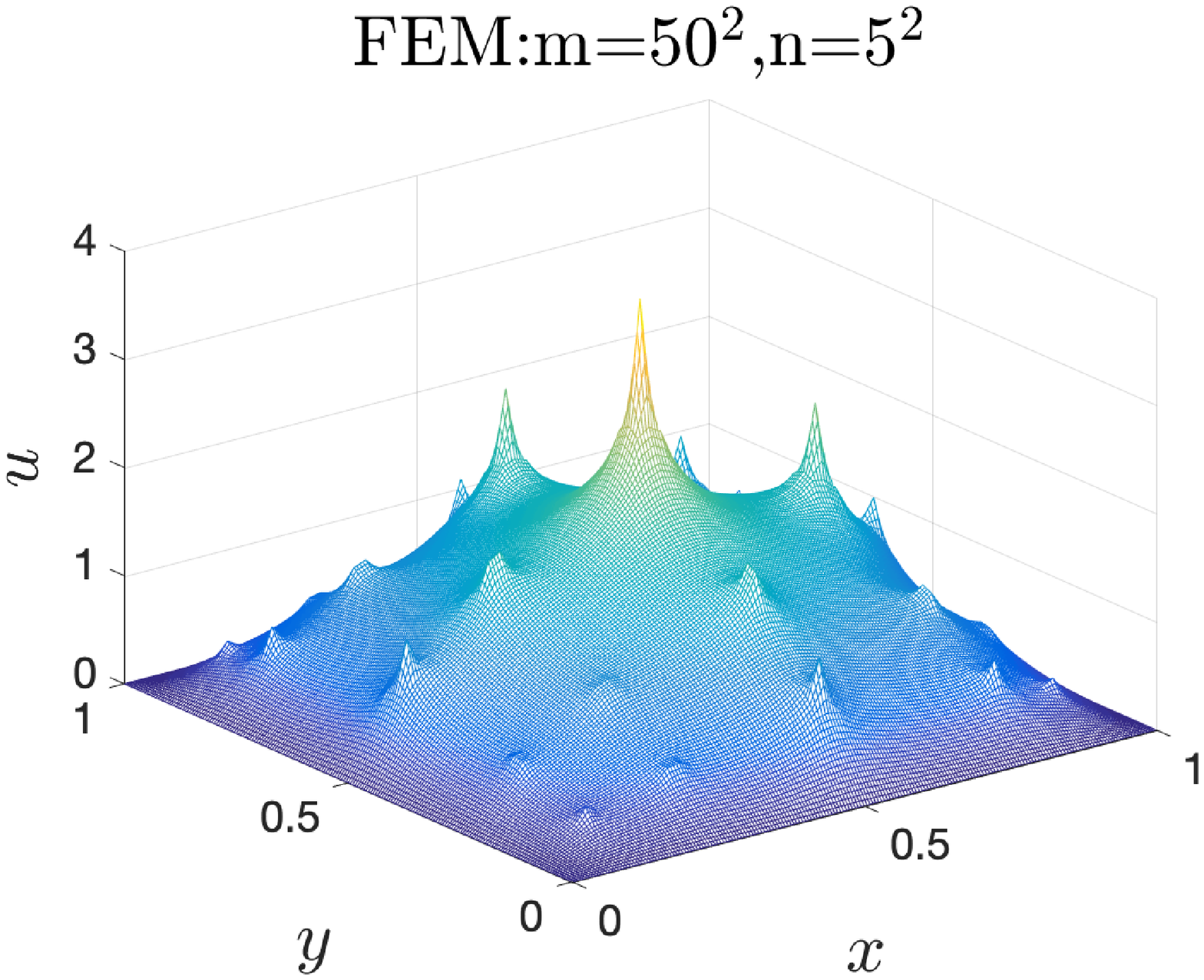}\hspace{0.1in}
	\includegraphics[width=0.23\textwidth]{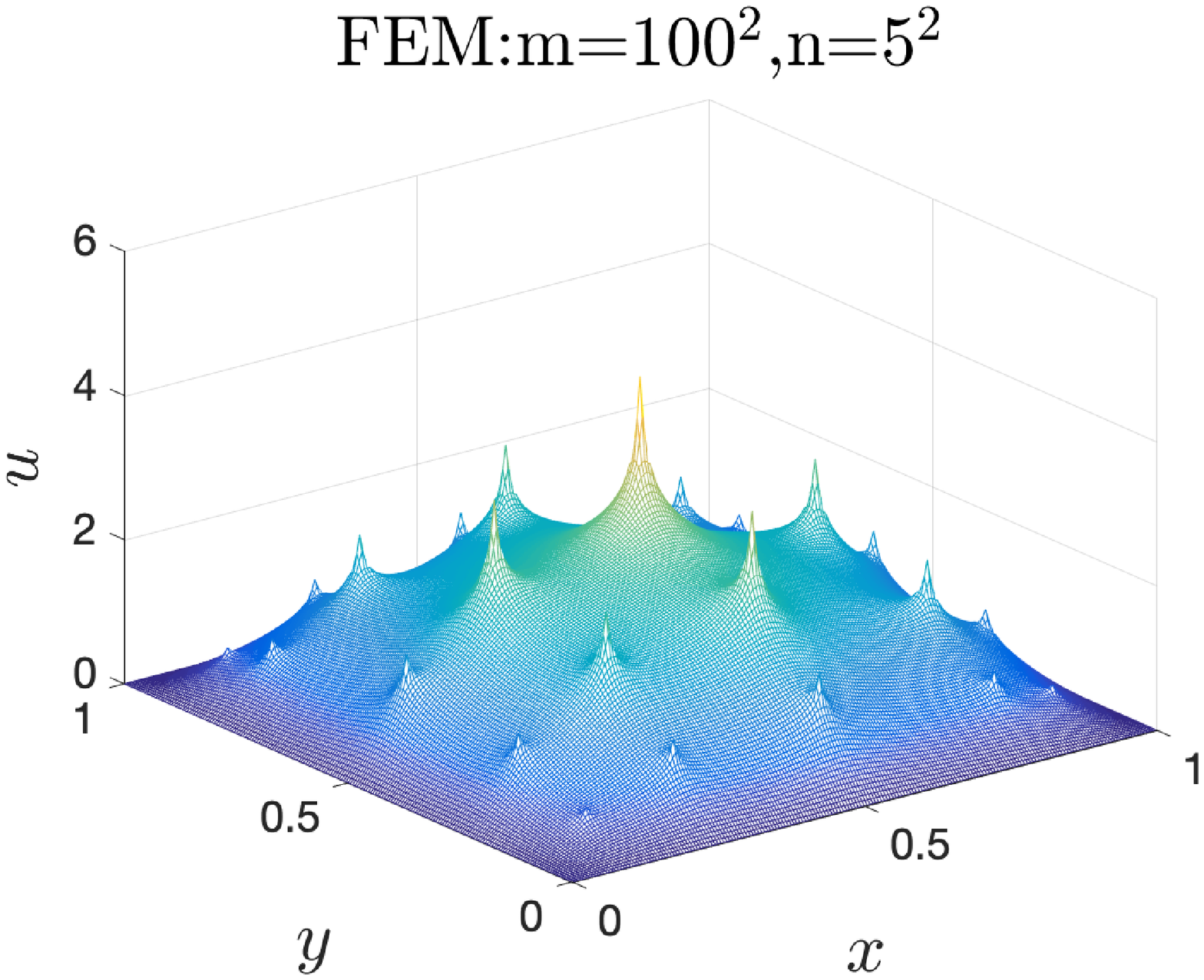}\hspace{0.1in} 
	\includegraphics[width=0.23 \textwidth]{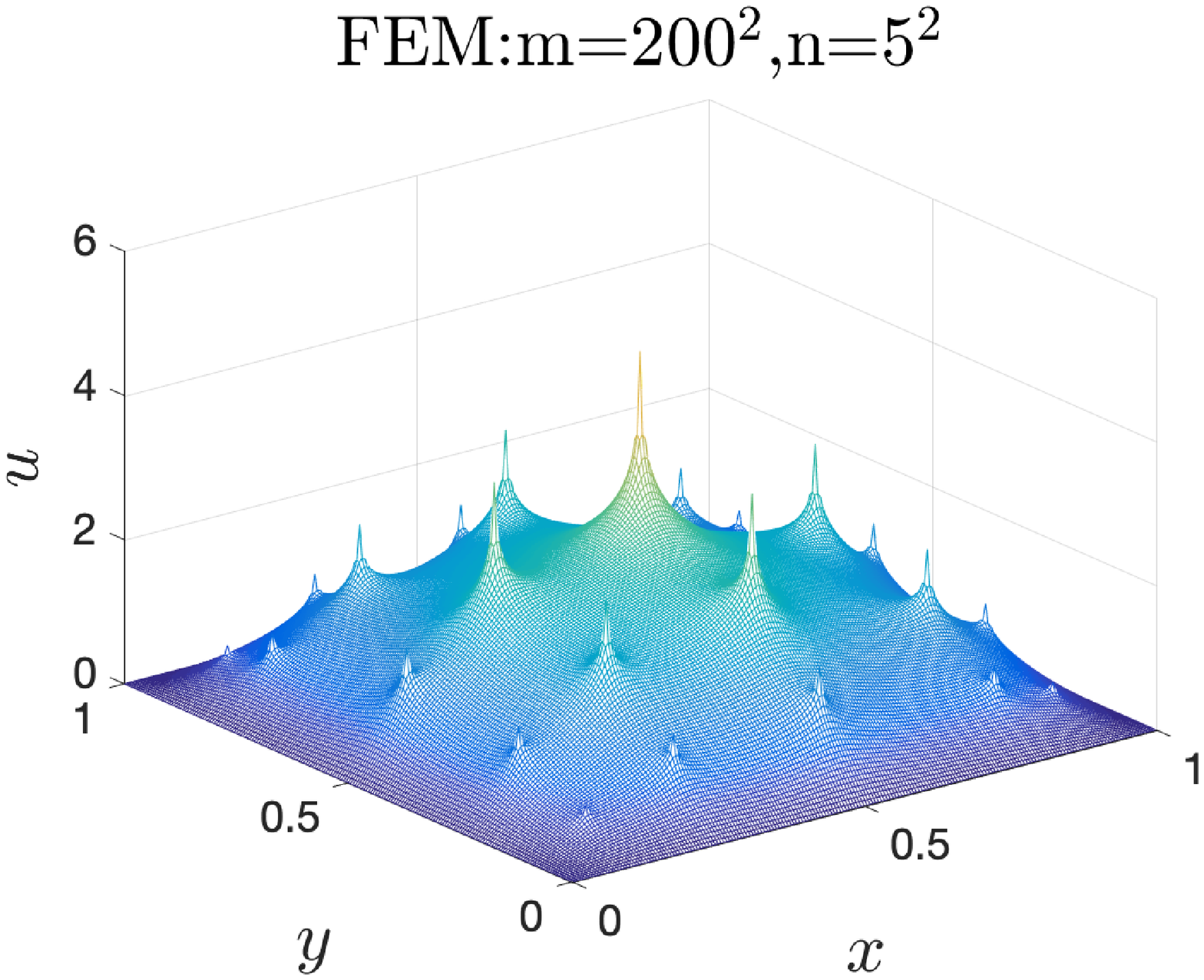}\\
\end{minipage}
\caption{\label{2d} (Example 3): R-G solutions with different $m$. The basis functions for the first and the second row are Legendre basis function and piecewise  linear  basis  function, respectively.}
\end{figure}

 \begin{figure}[ht!]
\centering
\begin{minipage}[]{\textwidth}
	\includegraphics[width=0.45\textwidth]{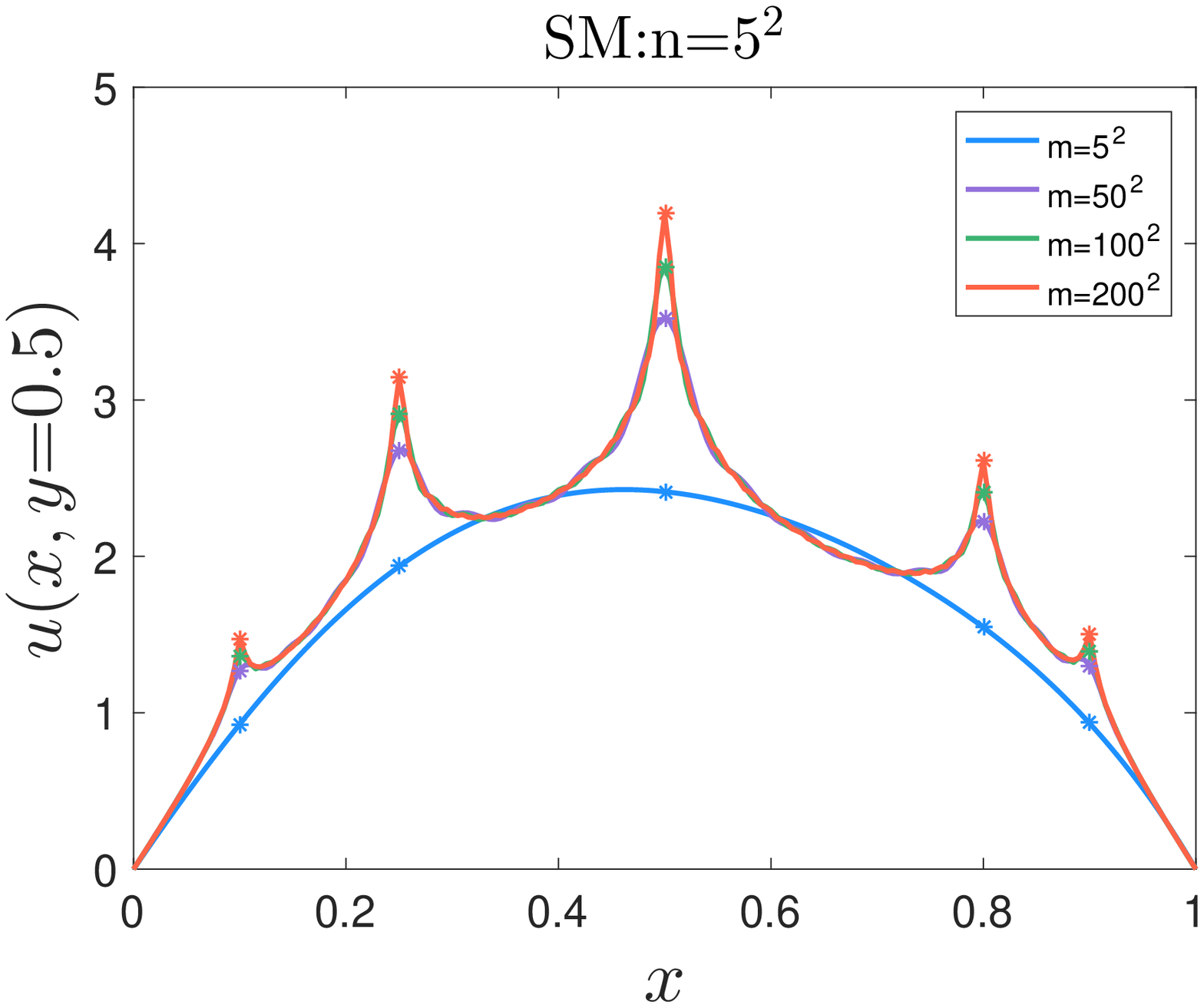}\hspace{0.2in} 
	\includegraphics[width=0.45\textwidth]{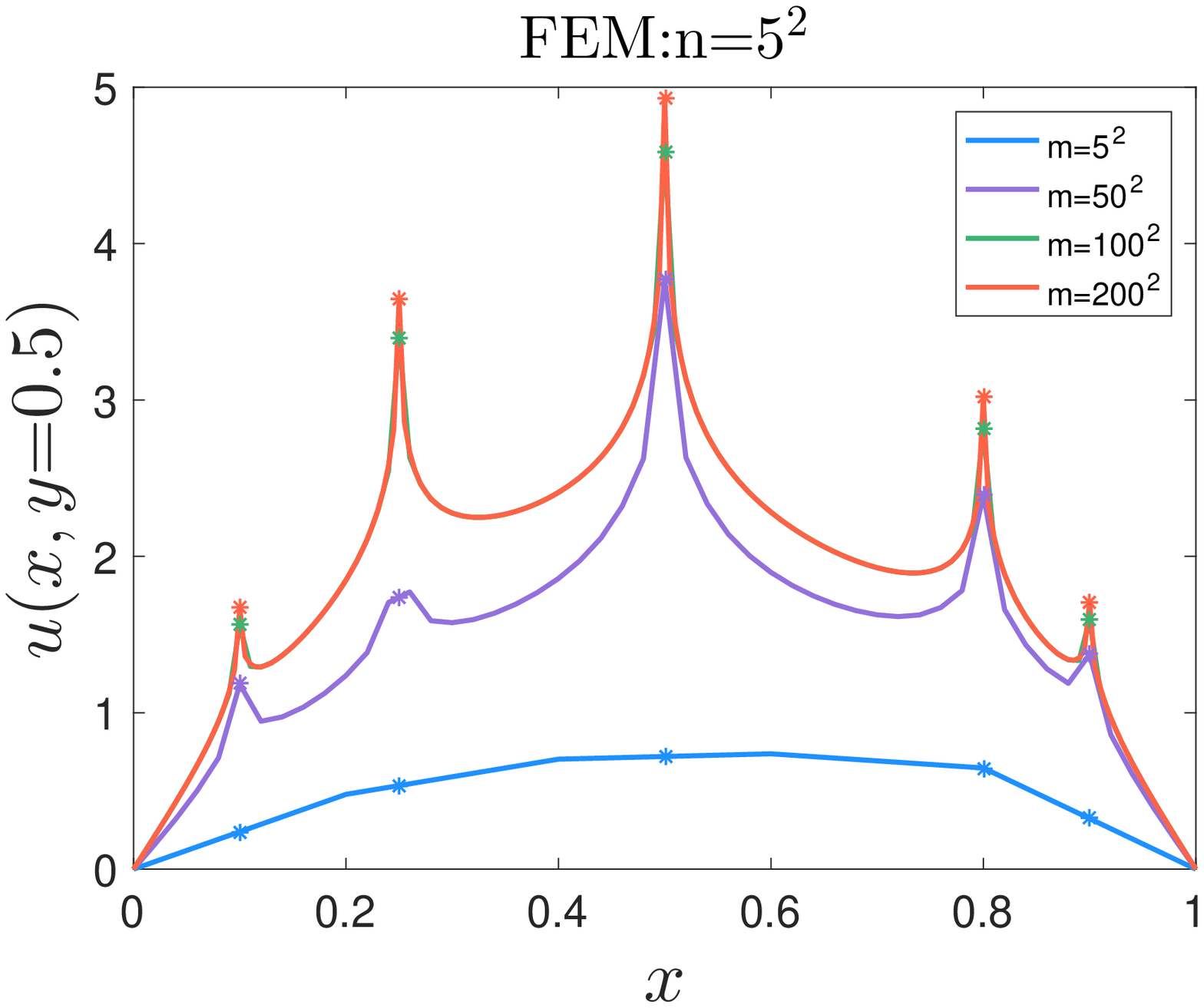}
\end{minipage}
\caption{\label{2d1} (Example 3): Profile of R-G solutions with different $m$.}
\end{figure}

 \begin{figure}[ht!]
\centering
\begin{minipage}[]{\textwidth}
	\includegraphics[width=0.24\textwidth]{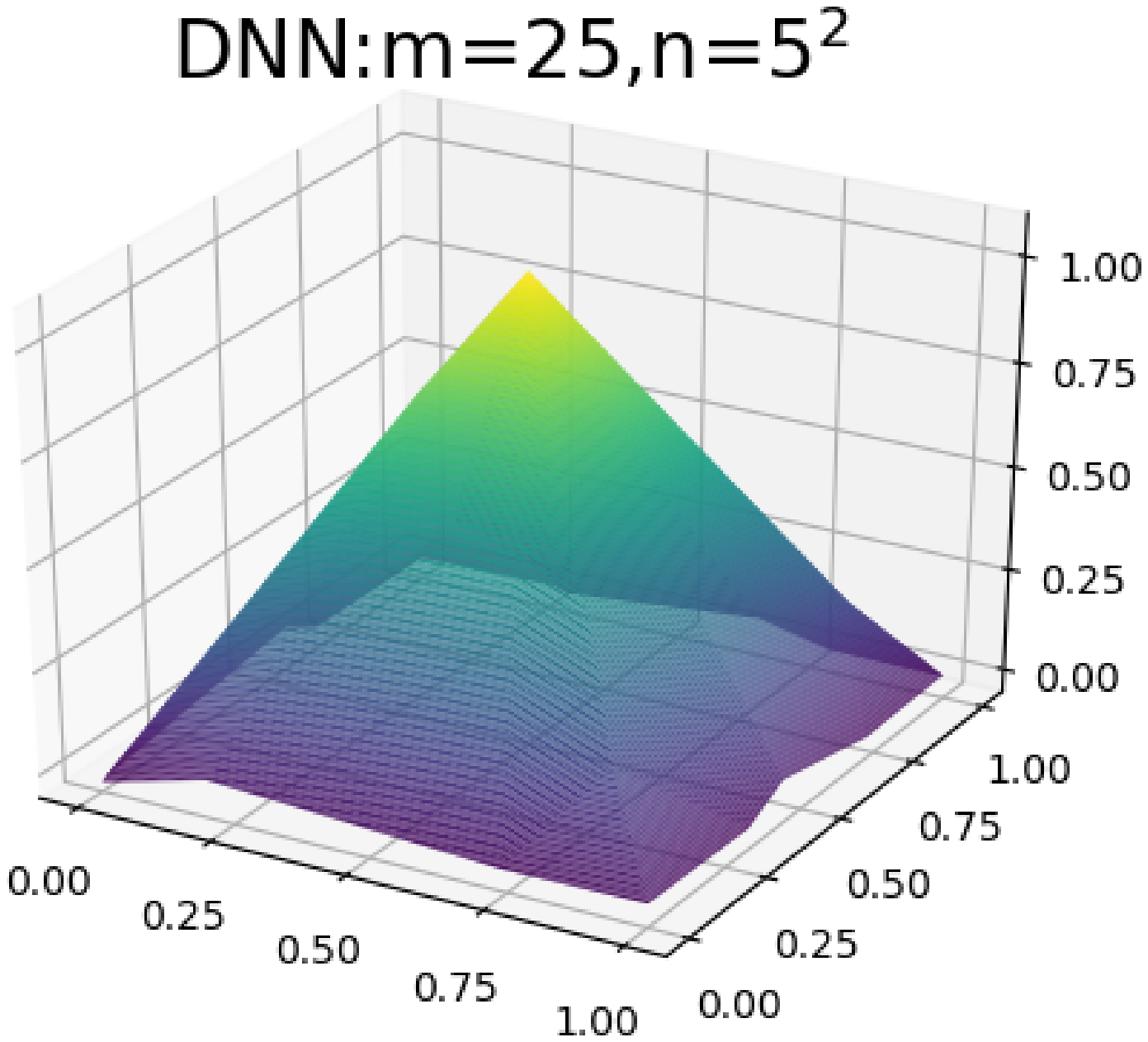}
	\includegraphics[width=0.24\textwidth]{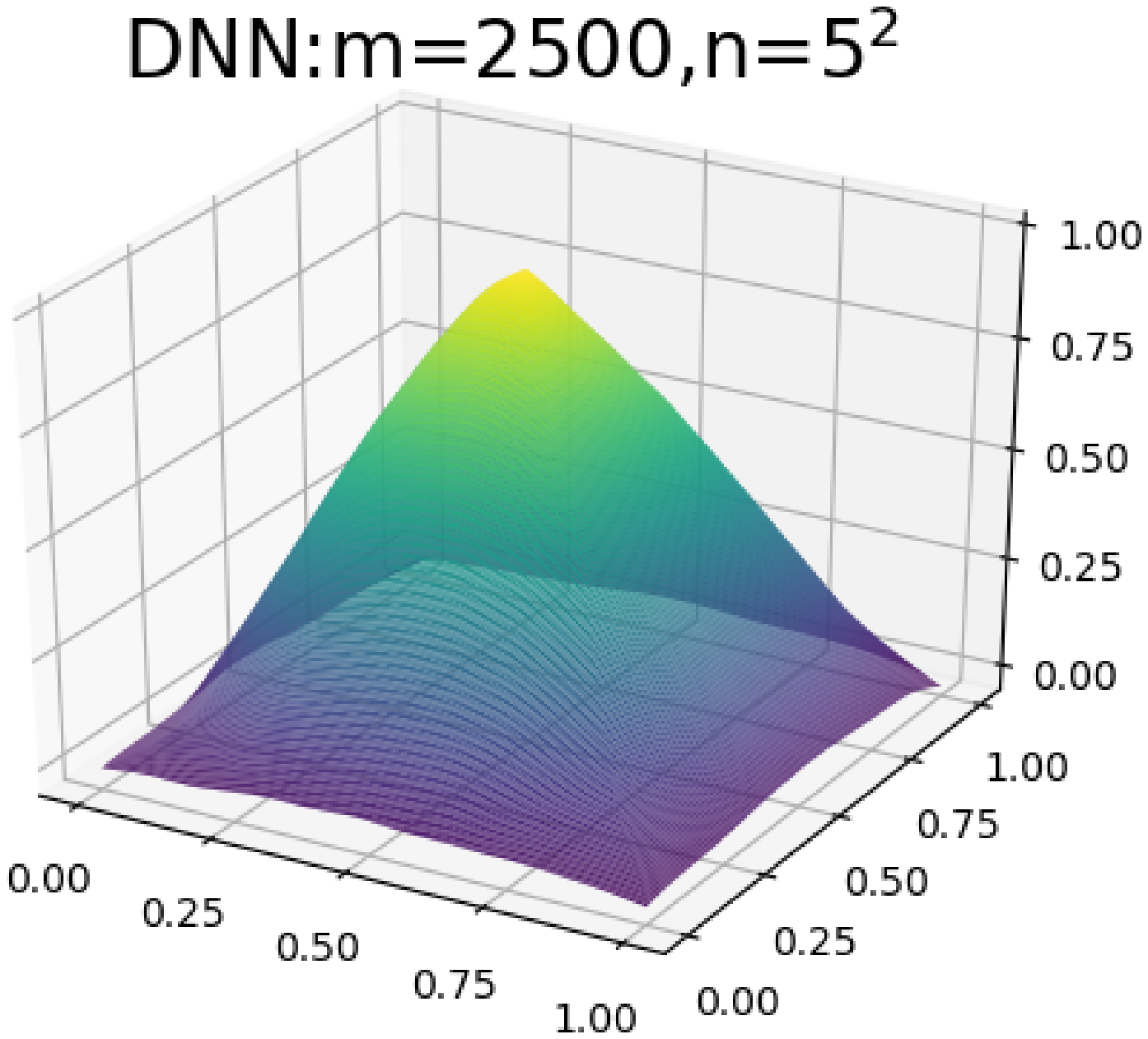}
	\includegraphics[width=0.24\textwidth]{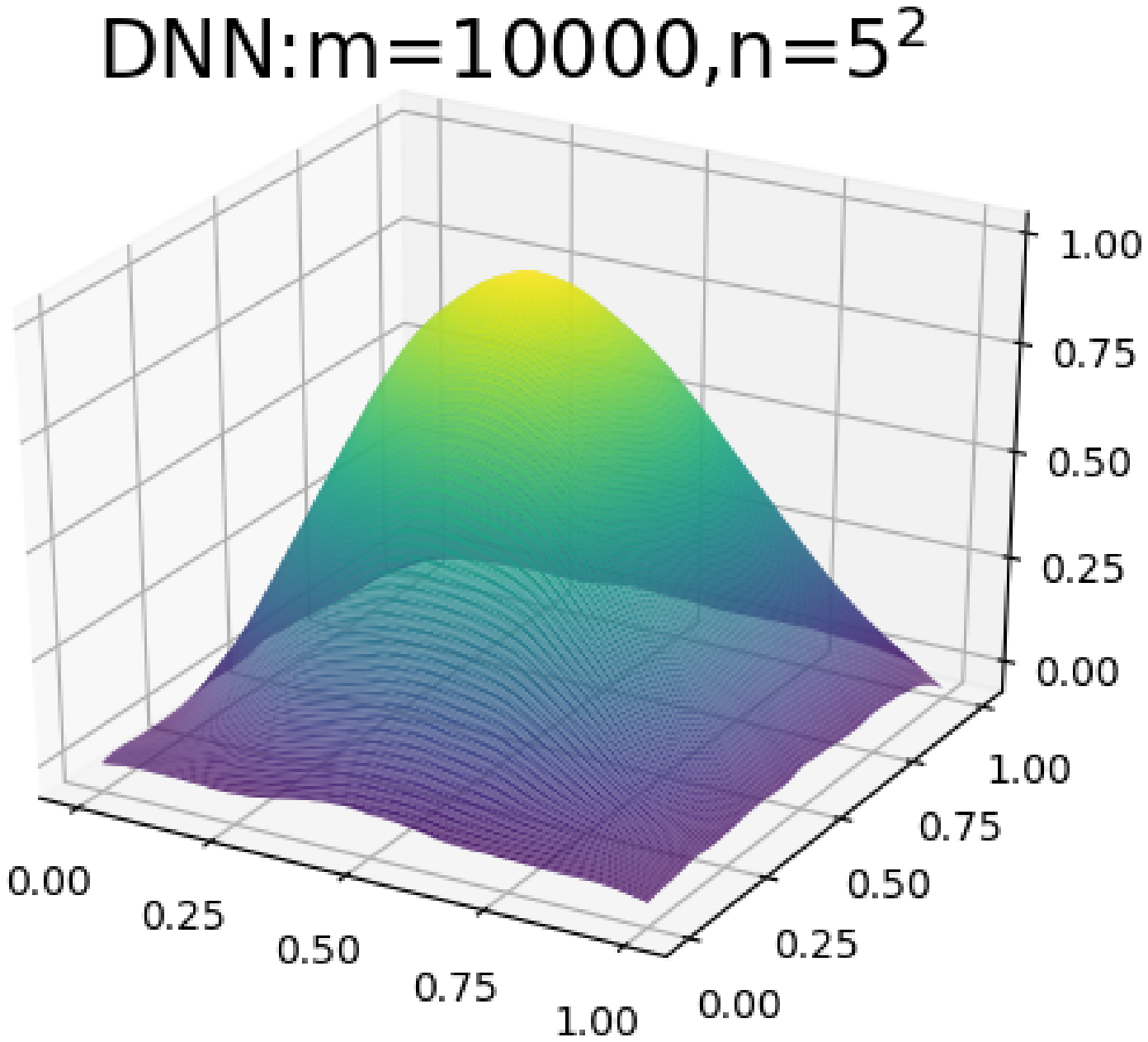}
	\includegraphics[width=0.24 \textwidth]{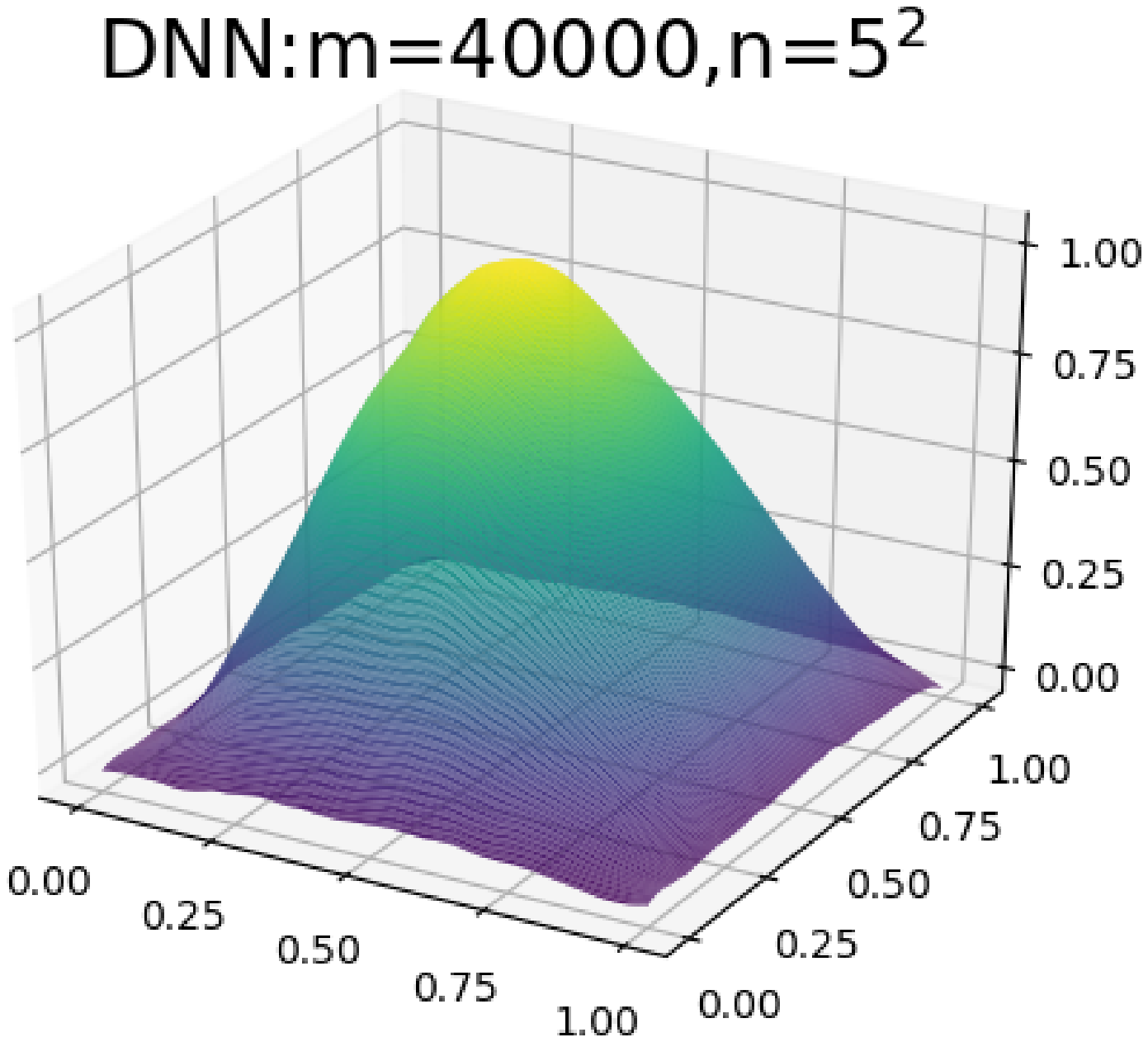}\vspace{0.2in} \\
	\includegraphics[width=0.24\textwidth]{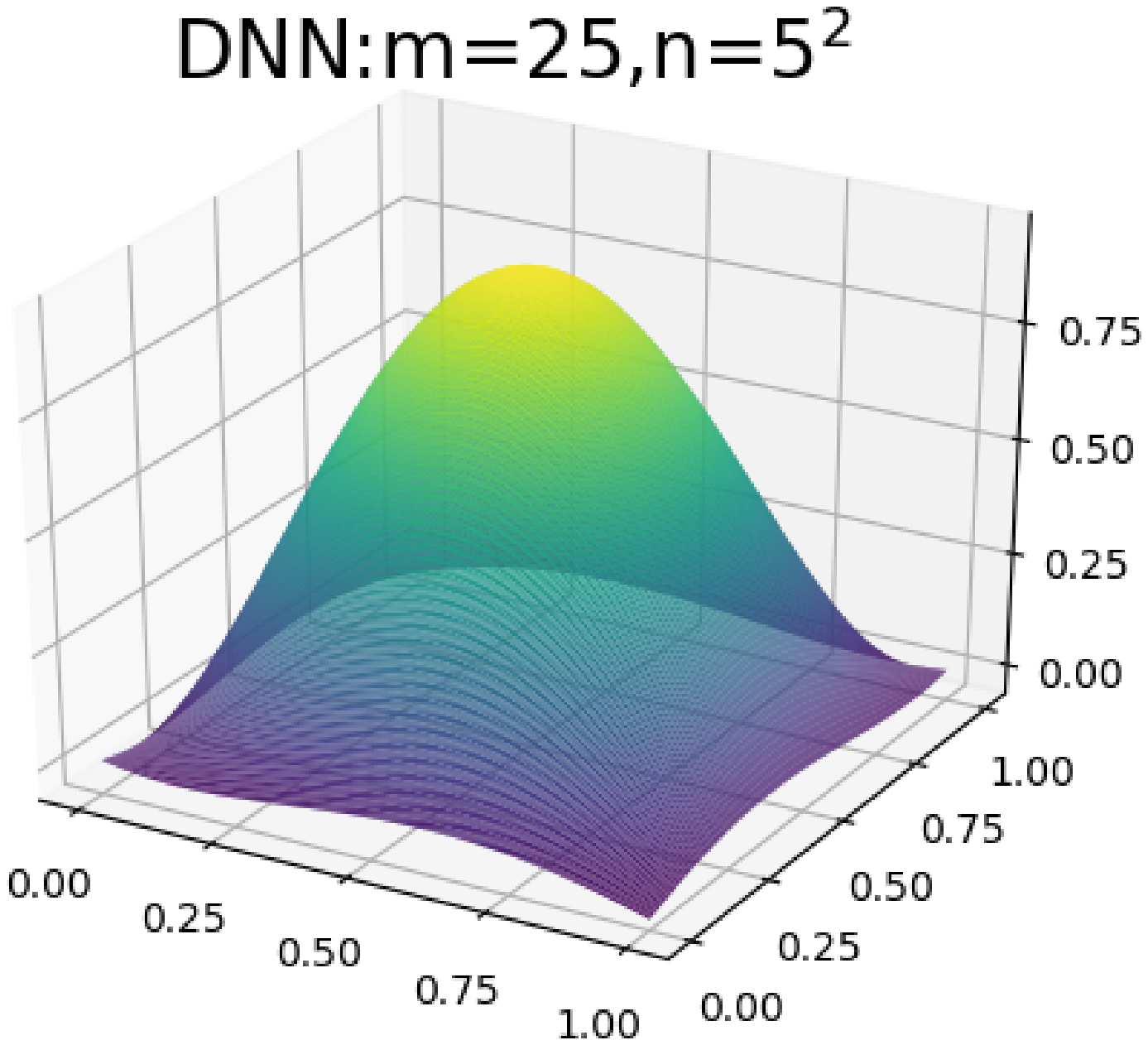}
	\includegraphics[width=0.24\textwidth]{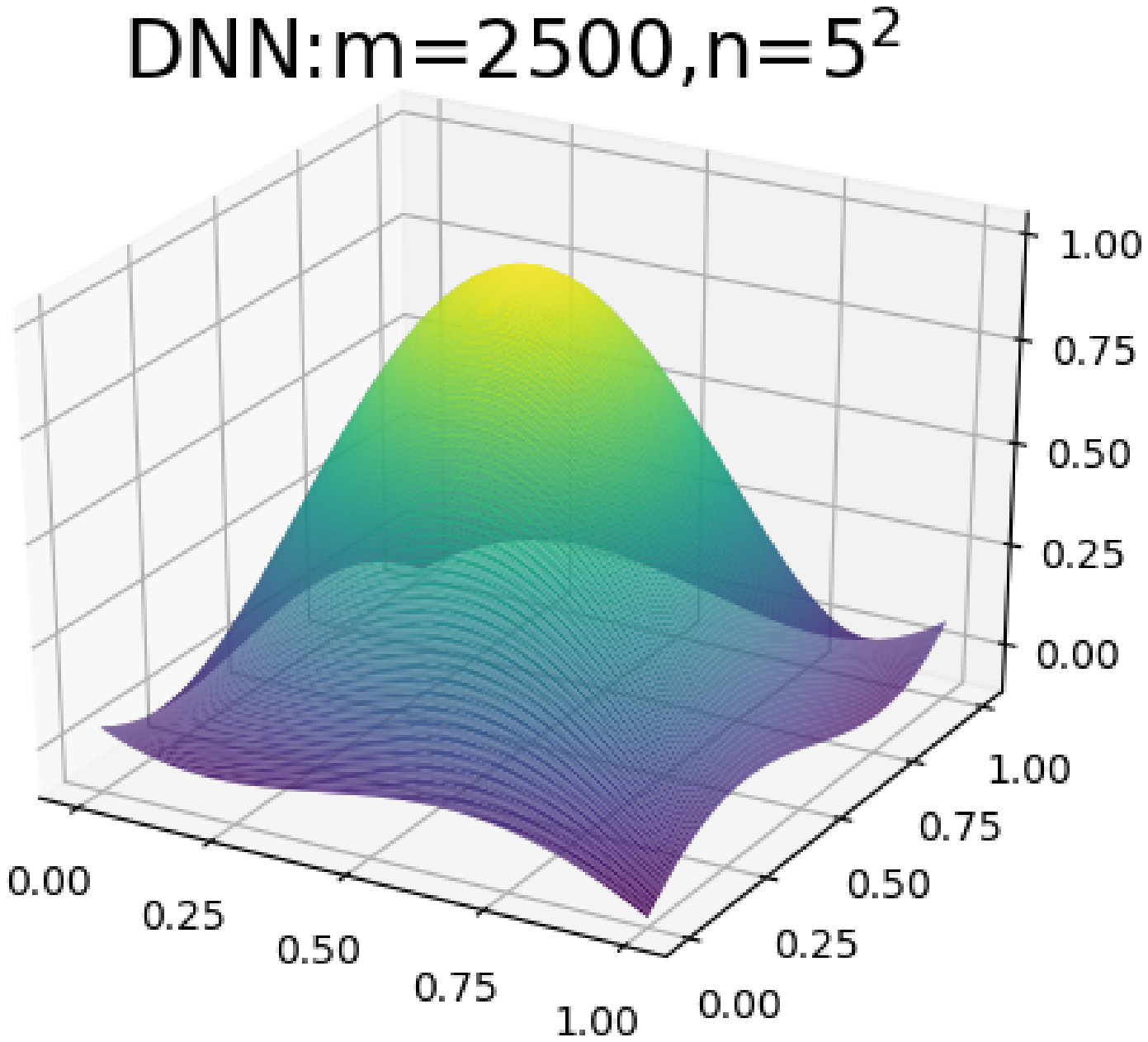}
	\includegraphics[width=0.24\textwidth]{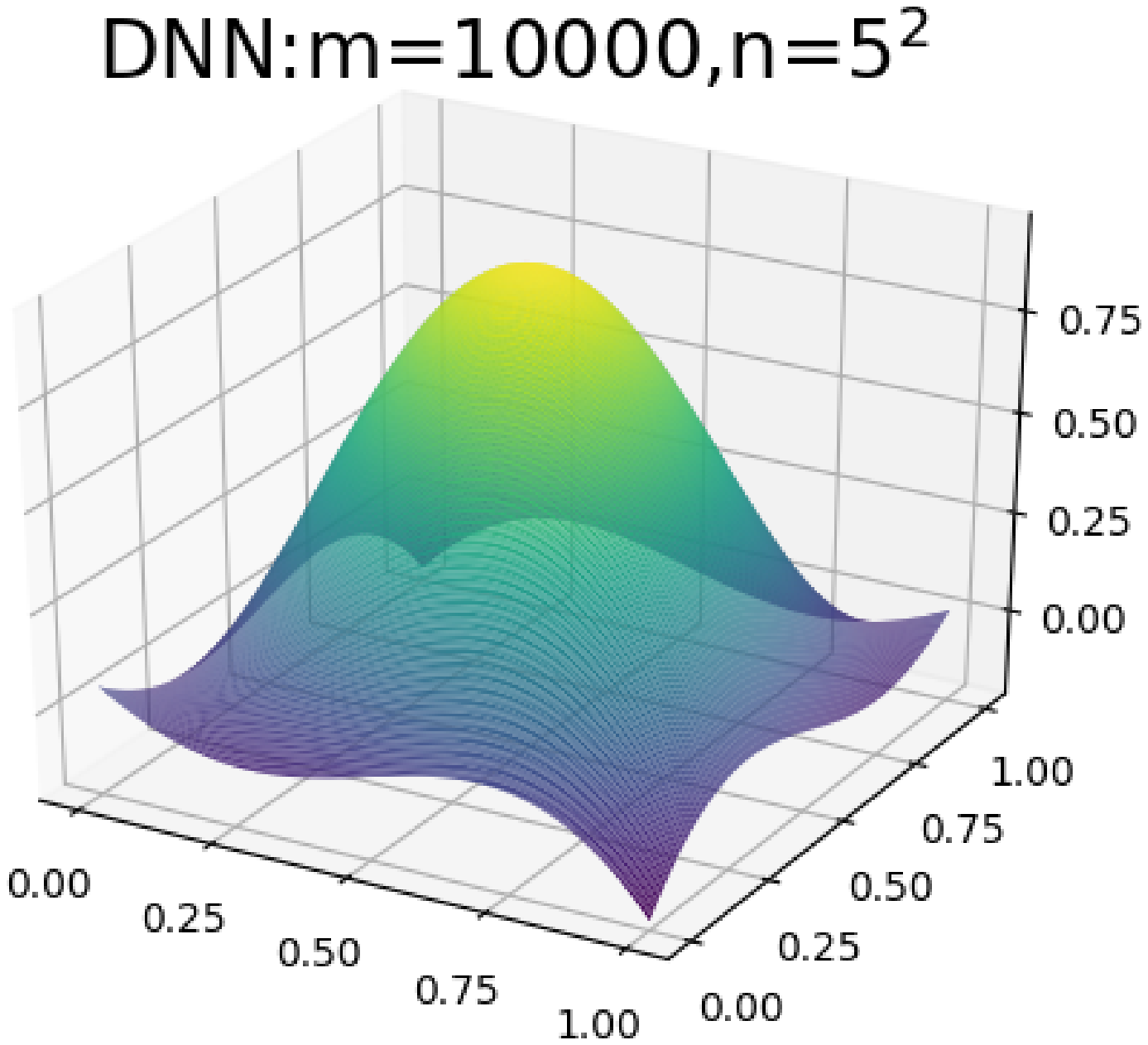}
	\includegraphics[width=0.24 \textwidth]{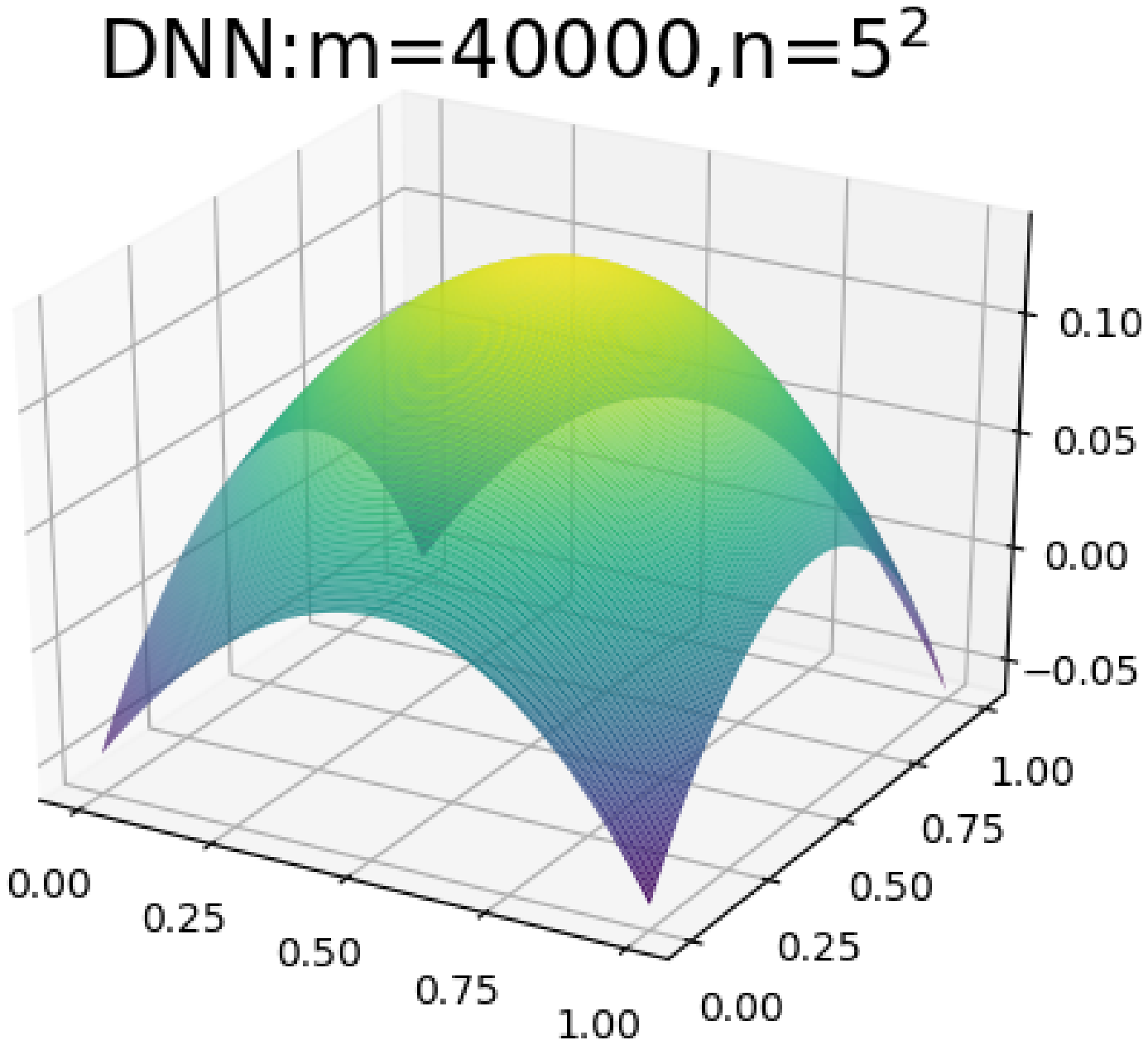}\\
\end{minipage}
\caption{\label{2dDNN} (Example 3): DNN solutions with different $m$. The activation functions for the first and the second row are ${\rm ReLU} (x)$ and $\sin (x)$, respectively.}
\end{figure}

\section{Discussion of DNN method in solving PDE}

DNNs are widely used in solving PDEs, especially for high-dimensional problems. The optimizing the loss functions in Eqs. \eqref{eq:lossdirect} and \eqref{eq:Energy-1} are equivalent to solving (\ref{RG}) except that the bases in \eqref{eq:lossdirect} and \eqref{eq:Energy-1} are adaptive. In addition, the DNN problem is optimized by (stochastic) gradient descent. The experiments in the previous section have shown that when the number of bases goes to infinity, DNN methods solve (\ref{Model}) by a relatively smooth and stable function compared with the one obtained by Theorem \ref{thm1}. We now utilize the F-Principle to understand what leads to the smoothness.  

For two-layer wide DNNs with $d=1$, the two terms of $\Gamma(\vxi)$ in the minimization problem of (\ref{eq: minFPnorm}) yield different fitting results. Note that $\min \int_{R} ||\vxi||^{-2}|\fF[h](\vxi)|^{2}\D\vxi$ leads to a piecewise linear function, while $\min \int_{R} ||\vxi||^{-4}|\fF[h](\vxi)|^{2}\D\vxi$ leads to a cubic spline. Since the DNN is a combination of both terms, therefore, the DNN would yields to a much smoother function than the piecewise linear function. For a general DNN, the coefficient  $\Gamma(\vxi)$  in (\ref{gdform}) cannot be obtained exactly, however,  the monotonically decreasing property of  $\Gamma(\vxi)$ with respect to $\vxi$ can be postulated based on the F-Principle.

\section{Conclusion}
This paper compares the different behaviors of Ritz-Galerkin method and DNN method through solving PDEs to better understand the working principle of DNNs. We consider a particular Poisson problem \eqref{Model}, where the r.h.s. term $f$ is a discrete function. We analyze why the two numerical methods behave differently in theory. R-G method deals with the discrete $f$ as the linear combination of Dirac delta functions, while DNN methods implicitly bias towards functions with more low-frequency components to interpolate the discrete sampling points due to the F-Principle. Furthermore, from the numerical experiments, as the number of bases increases, one can see that the solutions obtained by R-G method approximate piecewise linear functions for 1d case and singular function for the 2d case, regardless of the basis function, but the solutions obtained by DNN method are smoother for 1d case and stable for the 2d case. In conclusion, based on the theoretical and numerical study in comparison to traditional methods in solving PDEs, DNN method possesses special implicit bias towards low frequencies, which leads to a well-bahaved solution even in a heavily over parameterized setting.

\section*{Acknowledgments}
Zhiqin Xu is supported by National Key R\&D Program of China (2019YFA0709503), Shanghai Sailing Program, Natural Science Foundation of Shanghai (20ZR1429000), and partially supported by HPC of School of Mathematical Sciences at Shanghai Jiao Tong University. Jihong Wang and Jiwei Zhang is partially supported by NSFC under No. 11771035 and NSAF U1930402, and the Natural Science Foundation of Hubei Province No. 2019CFA007, and Xiangtan University 2018ICIP01.


\begin{thebibliography}{99}
\bibitem{Brenner2008The}S. C. Brenner and L. R. Scott, The mathematical theory of finite element methods, Springer, New York, third edition, 2008.
\bibitem{cai2020phase}W. Cai, X. G. Li and L. Z. Liu, A phase shift deep neural network for high frequency approximation and wave problems, SIAM J. Sci. Comput., 42(2020), A3285-A3312.
\bibitem{cai2019deep}Z. Q. Cai, J. S. Chen, M. Liu and X. Y. Liu, Deep least-squares methods: an unsupervised learning-based numerical method for solving elliptic PDEs, J. Comput. Phys., 420(2020), 109707.
\bibitem{weinan2017deep}W. N. E, J. Q. Han and A. Jentzen, Deep learning-based numerical methods for high-dimensional parabolic partial differential equations and backward stochastic differential equations,
Commun. Math. Stat., 5(2017), 349-380.
\bibitem{weinan2018deep}W. N. E and B. Yu, The deep ritz method: A deep learning-based numerical algorithm for solving variational problems, Commun. Math. Stat., 6(2018), 1-12.
\bibitem{hamilton2019dnn}A. Hamilton, T. Tran, M. Mckay, B. Quiring and P. Vassilevski, DNN approximation of nonlinear finite element equations, Tech. rep., Lawrence Livermore National Lab.(LLNL),
Livermore, CA (United States), 2019.
\bibitem{han2018solving}J. Q. Han, A. Jentzen and E. Weinan, Solving high-dimensional partial differential equations using deep learning, P. Natl. Acad. Sci., 115(2018), 8505-8510.
\bibitem{he2018relu}J. C. He, L. Li, J. C. Xu and C. Y. Zheng, ReLU deep neural networks and linear finite elements,
J. Comput. Math., 38(2020), 502-527.
\bibitem{liao2019deep}Y. L. Liao and P. B. Ming, Deep Nitsche method: Deep Ritz method with essential boundary conditions, arXiv preprint arXiv:1912.01309, 2019.
\bibitem{liu2020multi}Z. Q. Liu, W. Cai and Z. Q. J. Xu, Multi-scale deep neural network (mscaleDNN) for solving Poisson-Boltzmann equation in complex domains, Commun. Comput. Phys., to appear.
\bibitem{luo2019on}T. Luo, Z. Ma, Y. Y. Zhang and Z. Q. J. Xu, On the exact computation of linear frequency principle dynamics and its generalization, arXiv preprint arXiv:2010.08153, 2020.
\bibitem{Polyanin2002Handbook}A.D. Polyanin, Handbook of Linear Partial Differential Equations for Engineers and Scientists, Chapman \& Hall/CRC, 2002.
\bibitem{Christian2004Monte}C.P. Robert and G. Casella, Monte Carlo Statistical Methods, Springer, New York, 2004.
\bibitem{Shen2011Spectral}J. Shen, T. Tang and L. L. Wang, Spectral methods. Algorithms, analysis and applications, Springer, 2011.
\bibitem{siegel2019approximation}J. W. Siegel and J. C. Xu, On the approximation properties of neural networks, arXiv preprint arXiv:1904.02311, 2019.
\bibitem{wang2020mesh}Z. J. Wang, Z. W. Zhang, A mesh-free method for interface problems using the deep learning approach, J. Comput. Phys., 400(2020), 108963.
\bibitem{xu2019frequency}Z. Q. J. Xu, Y. Y. Zhang, T. Luo, Y. Y. Xiao and Z. Ma, Frequency principle: Fourier analysis sheds light on deep neural networks, Commun. Comput. Phys., to appear. 
\bibitem{xu2019training}Z. Q. J. Xu, Y. Y. Zhang and Y. Y. Xiao, Training behavior of deep neural network in frequency domain, International Conference on Neural Information Processing, 264-274, 2018.
\bibitem{zhang2019explicitizing}Y. Y. Zhang, Z. Q. J. Xu, T. Luo and Z. Ma, Explicitizing an implicit bias of the frequency principle in two-layer neural networks, arXiv preprint arXiv:1905.10264, 2019.
\end{thebibliography}

\end{document}